\newtheorem{theorem}{Theorem}[section]
\newtheorem{lemma}[theorem]{Lemma}
\newtheorem{cor}[theorem]{Corollary}
\newtheorem{claim}[theorem]{Claim}
\theoremstyle{definition}
\newtheorem{defn}[theorem]{Definition}
\newtheorem*{defn-non}{Definition}
\newtheorem{ques}[theorem]{Question}
\newenvironment{poc}{\begin{proof}[Proof of claim]}{\end{proof}}
\newcommand*{\abs}[1]{\lvert#1\rvert}
\newcommand{\cF}{\mathcal{F}}
\newcommand{\cH}{\mathcal{H}}
\title{Largest $3$-uniform set systems with VC-dimension 2}
\author{
Jian Wang\thanks{Department of Mathematics,
Taiyuan University of Technology,
Taiyuan, China. Email: wangjian01@tyut.edu.cn
}
\and 
Zixiang Xu\thanks{Extremal Combinatorics and Probability Group (ECOPRO), Institute for Basic Science (IBS), Daejeon, South Korea. Email: zixiangxu@ibs.re.kr}
\and
Shengtong Zhang\thanks{Department of Mathematics, Stanford University, USA. Email: stzh1555@stanford.edu}
}
\begin{document}

\maketitle
\begin{abstract}
   We determine the largest size of $3$-uniform set systems on $[n]$ with VC-dimension $2$ for all $n$.
\end{abstract}

\section{Introduction}
Let $\mathcal{F}\subseteq 2^{X}$ be a set system with ground set $X$. Given a subset $S \subseteq X$, we say that $\mathcal{F}$ \emph{shatters} $S$ if $\{F \cap S : F \in \mathcal{F}\} = 2^S$, that is, every subset of $S$ can be realized as the intersection of $S$ with some member of $\mathcal{F}$. The \emph{Vapnik–Chervonenkis dimension} (\emph{VC-dimension} for short) of $\mathcal{F}$ is the largest integer $d$ such that there exists  $S \subseteq X$ with $|S| = d$ that is shattered by $\mathcal{F}$. The VC-dimension is a key measure of the complexity of $\mathcal{F}$, with wide applications in combinatorics, learning theory, and discrete geometry.

As early as the 1970s, three groups of researchers~\cite{1972JCTASauer,1972PACJMShelah,1971TPAVCVC} independently established a fundamental result, which states that any set system \(\mathcal{F} \subseteq 2^{[n]}\) with VC-dimension at most \(d\) satisfies $
|\mathcal{F}| \le \sum_{i=0}^{d} \binom{n}{i}$.
This bound is tight and can be achieved by several distinct extremal constructions. Despite this classical result, the following fundamental question concerning uniform set systems remains open.

\begin{ques}\label{question}
 For given integers \(d \ge 2\) and \(n \ge 2d + 2\), what is the maximum size of $\mathcal{F}\subseteq\binom{[n]}{d+1}$ with VC-dimension at most \(d\)? 
\end{ques}

An elegant upper bound for this problem is \(\binom{n}{d}\), established by Frankl and Pach~\cite{1984Franklpach} using the so-called linear algebra methods in 1984. Over the past four decades, this bound has been improved in various settings. In particular, Ge, Xu, Yip, Zhang, and Zhao~\cite{2024FranklPach} recently proved that \(\binom{n}{d}\) is not tight for every \(d\) and every \(n \ge 2d + 2\). When \(n\) is sufficiently large and \(d\) is a prime power, Mubayi and Zhao~\cite{2007JAC} showed that the Frankl--Pach bound can be improved by an additive term of order \(\Omega_d(\log n)\). The above upper bounds are all based on algebraic methods. Very recently, Chao, Xu, Yip, and Zhang~\cite{2025CombProof} improved the upper bound via a purely combinatorial argument, showing that for any \(d \ge 2\) and sufficiently large \(n\), the size of a \((d+1)\)-uniform set system with VC-dimension \(d\) is at most
\[
\binom{n-1}{d} + O_d\left(n^{d - 1 - \frac{1}{4d - 2}}\right).
\]

On the other hand, a natural lower bound is \(\binom{n-1}{d}\), given by the canonical star construction. Moreover, Erd\H{o}s~\cite{1984Erdos}, Frankl and Pach~\cite{1984Franklpach} conjectured that \(\binom{n-1}{d}\) is the maximum size when $n$ is large. However, in 1997, Ahlswede and Khachatrian~\cite{1997CombFan} provided a surprising counterexample, proving a stronger lower bound \(\binom{n-1}{d} + \binom{n-4}{d-2}.\)
This construction was later generalized by Mubayi and Zhao~\cite{2007JAC}, who showed that there exist many non-isomorphic extremal constructions achieving this lower bound. Based on these findings, Mubayi and Zhao~\cite{2007JAC} conjectured that the lower bound
\(
\binom{n-1}{d} + \binom{n-4}{d-2}
\)
is in fact the solution to~\cref{question} when $n\ge 2(d+1)$.

The main purpose of this paper is to completely determine the maximum size of $3$-uniform set systems on $[n]$ with VC-dimension $2$\footnote{As established by Frankl and Pach in~\cite{1984Franklpach}, any $3$-uniform set system with VC-dimension $1$ has size at most $n < \binom{n-1}{2} + 1$ for $n \ge 7$. Therefore, throughout this paper, the condition of having VC-dimension at most $2$ can typically be understood as having VC-dimension exactly $2$.
} for all $n$, thereby completely resolving~\cref{question} when $d=2$.

When $n \in \{3,4,5\}$, the answer is $\binom{n}{3}$ since $\binom{[n]}{3}$ is intersecting, so it has VC-dimension $2$. To our surprise, the conjecture of Mubayi and Zhao does not hold when $n=6$ and $d=2$. More precisely, there exists a $3$-uniform set system $\mathcal{F}$ on $[6]$ of size $13$ having VC-dimension $2$:
\[
\mathcal{F} = \left\{
\begin{array}{l}
\{1, 2, 6\}, \{1, 3, 5\}, \{1, 4, 5\}, \{1, 4, 6\}, \{1, 5, 6\}, \\
\{2, 3, 4\}, \{2, 4, 5\}, \{2, 4, 6\}, \{2, 5, 6\}, \\
\{3, 4, 5\}, \{3, 4, 6\}, \{3, 5, 6\}, \{4, 5, 6\}
\end{array}
\right\}
\]
On the other hand, an explicit computation using computer programs (see Appendix and~\cite{2025CodesForsmallN}) shows that the size of any $3$-uniform set system on $[6]$ with VC-dimension $2$ is at most $13$. This confirms that the correct answer to~\cref{question} for $n = 6$ and $d = 2$ is $13$.

Our main result reads as follows. 
\begin{theorem} 
\label{thm:VC-2}
    Let $n\ge 7$. If $\mathcal{F}\subseteq\binom{[n]}{3}$ has VC-dimension at most $2$, then $|\mathcal{F}|\le\binom{n-1}{2}+1$.
\end{theorem}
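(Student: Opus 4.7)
The plan is to prove \Cref{thm:VC-2} by strong induction on $n$, with the base case $n = 7$ handled through a dedicated combinatorial argument (perhaps computer-assisted, in the same spirit as the $n = 6$ verification mentioned in the introduction). For the inductive step, fix $n \geq 8$ and suppose toward contradiction that $\mathcal{F} \subseteq \binom{[n]}{3}$ has VC-dimension at most $2$ but $|\mathcal{F}| \geq \binom{n-1}{2} + 2$. For every vertex $v \in [n]$, the deletion $\mathcal{F}_{-v} = \{F \in \mathcal{F} : v \notin F\}$ still has VC-dimension at most $2$ on $[n] \setminus \{v\}$, so the inductive hypothesis yields $|\mathcal{F}_{-v}| \leq \binom{n-2}{2} + 1$, and hence the degree $d(v) = |\mathcal{F}| - |\mathcal{F}_{-v}| \geq n - 1$ for every $v$.

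The next step is to record a general non-shattering principle. For any $F = \{x, y, z\} \in \mathcal{F}$, since $F$ is not shattered, at least one of the following three obstructions must hold: (i) some pair inside $F$ has codegree exactly $1$ in $\mathcal{F}$; (ii) some vertex $u \in F$ admits no \emph{private} triple $F' \in \mathcal{F}$ with $u \in F'$ and $F' \setminus \{u\} \subseteq [n] \setminus F$; or (iii) no triple of $\mathcal{F}$ is disjoint from $F$. Now pick a vertex $v$ of maximum degree $\Delta$, and set $k = \binom{n-1}{2} - \Delta \geq 0$; then $|\mathcal{F}_{-v}| \geq k + 2$. If $k = 0$, then $\mathcal{F}$ contains the full star at $v$, and any $F = \{a, b, c\} \in \mathcal{F}_{-v}$ (which exists since $|\mathcal{F}_{-v}| \geq 2$) gets shattered: the pair- and singleton-patterns on $\{a, b, c\}$ are realized by star-sets $\{v, \cdot, \cdot\}$, while the empty pattern is realized by $\{v, w_1, w_2\}$ with $w_1, w_2 \in [n] \setminus (F \cup \{v\})$ (which exist because $n \geq 6$), contradicting VC-dimension $2$. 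If $k \geq 1$, let $\bar{L}_v$ be the complement of the link graph $L_v$ in $K_{n-1}$, a graph on $[n] \setminus \{v\}$ with exactly $k$ edges, and perform a case analysis on the joint structure of $\bar{L}_v$ and the $\geq k + 2$ triples in $\mathcal{F}_{-v}$, applying the deficiency principle to each such triple to locate a shattered $3$-set.

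The main obstacle lies in the structural case analysis of the subcase $k \geq 1$: the missing star-sets at $v$ encoded by $\bar{L}_v$ can, in principle, be compensated by carefully chosen triples in $\mathcal{F}_{-v}$, and every such compensation must be ruled out for $n \geq 7$. The natural plan is to break into subcases according to the shape of $\bar{L}_v$ (matching, star, path, or containing a triangle) together with how the pairs in $\mathcal{F}_{-v}$ align with the edges of $\bar{L}_v$; in each subcase, one leverages the abundance of ``free'' vertices outside any fixed bounded set, guaranteed by $n \geq 7$, to realize all eight patterns on some $3$-set. The tightest and most delicate subcases will be those with small defect $k$ (especially $k \in \{1, 2\}$) where $\mathcal{F}_{-v}$ is forced to concentrate near $\bar{L}_v$, since there the structural slack is minimal; handling these cases likely requires applying the non-shattering principle iteratively to several triples at once, and is the technical heart of the argument.
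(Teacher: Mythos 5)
Your setup is sound and coincides with the paper's: induction on $n$ with a computational base case at $n=7$, the degree bound $d(v)\ge n-1$ obtained by deleting a vertex and invoking the inductive hypothesis, and your ``non-shattering principle'' is exactly the paper's witness machinery (for each $F_i$ a maximal $B_i\subsetneq F_i$ with $F\cap F_i\neq B_i$ for all $F\in\mathcal{F}$), your obstructions (i)/(ii)/(iii) corresponding to $|B_i|=2,1,0$. Your treatment of the full-star case $k=0$ is also correct.

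The gap is everything after that, and it is not merely that the case analysis is left as a plan: the plan rests on a false premise. You fix a maximum-degree vertex $v$, set $k=\binom{n-1}{2}-\Delta$, and propose to classify $\bar L_v$ as a matching, star, path, or near-triangle, with the delicate cases being $k\in\{1,2\}$. But nothing forces $\Delta$ to be anywhere near $\binom{n-1}{2}$; the only a priori bound is the averaging bound $\Delta\ge 3|\mathcal{F}|/n\approx 3n/2$, so $k$ can be of order $n^2$, $\bar L_v$ can be a dense graph on $n-1$ vertices with no tractable shape, and $|\mathcal{F}_{-v}|\ge k+2$ is correspondingly huge. A hypothetical counterexample need not concentrate on any single vertex, so a local analysis at one vertex cannot close the inductive step. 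The paper instead argues globally: it shows each $2$-element witness is used at most once, each singleton witness at most once (the hard \cref{lemma: NoTwoB1}, which itself needs the inductive hypothesis and substantial structural work), and that the empty-witness sets form an intersecting family $\mathcal{C}$; the resulting identity $|\mathcal{F}|=|\mathcal{B}|+|L|+|\mathcal{C}|$ of \cref{cor:MainInequality} is then defeated by simultaneously bounding the graph $\mathcal{B}$ (via its maximum degree), the set $L$, and the family $\mathcal{C}$ (via its transversal number and forbidden substructures). None of this global counting, nor any substitute for it, appears in your proposal, so the inductive step is not established.
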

Combine lower bound from the construction in~\cite{1997CombFan,2007JAC}, the largest size of $3$-uniform set system on $[n]$ with VC-dimension at most $2$ for $n\ge 7$ is exactly $\binom{n-1}{2}+1$. This marks an initial advance in tackling Question~\ref{question}, and settles the specific case concerning $3$-uniform set systems, which was explicitly raised by Mubayi and Zhao~\cite{2007JAC} in 2007.

An interesting phenomenon is that the answer to this problem is divided into three distinct cases:
\begin{enumerate}
    \item When \( n \leq 5 \), the answer is \( \binom{n}{3} \). 
    \item When \( n = 6 \), the answer is \( \binom{n - 1}{2} + 3 = 13 \). 
    \item When \( n \geq 7 \), the answer is \( \binom{n - 1}{2} + 1 \).
\end{enumerate}
  This segmentation reveals intriguing structural changes in the problem for different ranges of $n$.

\section{Proof of~\cref{thm:VC-2}}
The proof of~\cref{thm:VC-2} consists of two parts. First, we verify \cref{thm:VC-2} for $n = 7$ via explicit computation, see Appendix and~\cite{2025CodesForsmallN}.
\begin{lemma}\label{lemma:7316}
    If $\mathcal{F}\subseteq\binom{[7]}{3}$ has VC-dimension at most $2$, then $|\mathcal{F}|\le\binom{6}{2}+1= 16$.
\end{lemma}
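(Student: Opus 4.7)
The claim is a finite verification: $\binom{[7]}{3}$ has only $35$ members, and we must check that every $17$-element subfamily shatters some $3$-element set. The plan is a computer-aided exhaustive search, accelerated by the reductions below; running time will not be the bottleneck, but confidence in the implementation will be.

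First, I would use the already-verified $n=6$ bound to constrain the element degrees. For each $v\in[7]$, the sub-family $\{F\in\mathcal{F}:v\notin F\}$ is a $3$-uniform VC-dimension $\leq 2$ family on $6$ points, hence of size at most $13$. So the degree $d_v:=|\{F\in\mathcal{F}:v\in F\}|$ satisfies $d_v\geq |\mathcal{F}|-13\geq 4$ whenever $|\mathcal{F}|\geq 17$, and summing over $v$ gives $3|\mathcal{F}|=\sum_v d_v\geq 51$. Consequently some vertex---say vertex $7$ after relabeling---has degree at least $\lceil 51/7\rceil=8$, fixing the link of $7$ to be a graph on $6$ vertices with at least $8$ edges, a comparatively small combinatorial object.

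Next, I would run a depth-first backtracking search over $\mathcal{F}\subseteq\binom{[7]}{3}$: fix an ordering of the $35$ triples, and at each node either include or exclude the next one, pruning whenever (i)~the current partial family already shatters some $S\in\binom{[7]}{3}$---a cheap check over $35$ candidates with $8$ intersection patterns each---or (ii)~the remaining triples cannot bring $|\mathcal{F}|$ up to $17$. The $S_7$-symmetry, combined with the normalisation above, reduces the effective search to subgraphs $L\subseteq\binom{[6]}{2}$ with $|L|\geq 8$ (up to the stabiliser of $7$) together with subfamilies $\mathcal{F}'\subseteq\binom{[6]}{3}$ of size at least $|\mathcal{F}|-|L|$. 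Even the unsymmetrised search finishes in seconds; the code appears in the appendix and at~\cite{2025CodesForsmallN}.

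The main obstacle is not compute time but correctness of the implementation. I would mitigate this by running two independent implementations, reproducing the $n=6$ value of $13$ as a sanity check, and confirming that the extremal configurations of size $16$ returned by the search match the constructions of~\cite{1997CombFan,2007JAC}. A closed-form combinatorial proof specific to $n=7$ appears disproportionately laborious relative to the verification; since this lemma functions as a base case for the inductive proof of~\cref{thm:VC-2} for $n\geq 8$, the computer check is the natural route.
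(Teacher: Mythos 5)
Your proposal is correct and matches the paper's approach: the lemma is established by a computer-aided backtracking search over subfamilies of $\binom{[7]}{3}$ with pruning on the VC-dimension condition, exactly as in the paper's Appendix and~\cite{2025CodesForsmallN}. Your additional reductions (degree bounds via the $n=6$ case and symmetry normalisation) are valid accelerations but do not change the nature of the argument.
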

Then, we complete the proof via induction.
\begin{theorem} \label{Induction}
Let $n \geq 8$. If \cref{thm:VC-2} holds for $(n - 1)$, then it holds for $n$.
\end{theorem}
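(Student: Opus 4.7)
The plan is to induct by deleting a vertex, arguing by contradiction: assume $|\mathcal{F}|\ge\binom{n-1}{2}+2$ and derive a contradiction. For each $x\in[n]$, write $\mathcal{F}_{\bar x}:=\{F\in\mathcal{F}:x\notin F\}$; this is a $3$-uniform set system on the $(n-1)$-element ground set $[n]\setminus\{x\}$ that still has VC-dimension at most $2$. Since $n-1\ge 7$, the inductive hypothesis applies and gives $|\mathcal{F}_{\bar x}|\le\binom{n-2}{2}+1$, so every vertex degree $d(x):=|\mathcal{F}|-|\mathcal{F}_{\bar x}|$ satisfies
\[
d(x)\ \ge\ \Bigl(\binom{n-1}{2}+2\Bigr)-\Bigl(\binom{n-2}{2}+1\Bigr)\ =\ n-1.
\]
So $\mathcal{F}$ has minimum degree at least $n-1$. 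This already places $\mathcal{F}$ outside the conjectured extremal family, in which several vertices have degree only $n-2$.

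I would then split on whether this bound is tight. \emph{Case A:} some vertex $x^{*}$ is \emph{saturated}, meaning $d(x^{*})=n-1$; then $\mathcal{F}_{\bar{x^{*}}}$ attains the extremal size $\binom{n-2}{2}+1$ on $n-1$ vertices, and the link graph $L_{x^{*}}$ on $[n]\setminus\{x^{*}\}$ has exactly $n-1$ edges. The task is to show that any choice of $n-1$ edges attached to any extremal $(n-1)$-vertex VC-$2$ system forces a shattered $3$-set. For each edge $\{y,z\}\in L_{x^{*}}$ the triple $\{x^{*},y,z\}$ is in $\mathcal{F}$, and pairing it either with triples $\{y,z,w\}\in\mathcal{F}_{\bar{x^{*}}}$ or with other edges $\{y,z'\},\{z,z'\}$ of $L_{x^{*}}$ already realizes many of the eight trace patterns needed to shatter some $3$-set; a careful double counting should show that $n-1$ link edges leave no room to avoid completing such a pattern. \emph{Case B:} $d(x)\ge n$ for every $x$, so $|\mathcal{F}_{\bar x}|\le\binom{n-2}{2}$ for every $x$, i.e.\ every restricted sub-family is strictly below extremal. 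I would use $3|\mathcal{F}|=\sum_x d(x)\ge n\cdot n$ together with the assumption $|\mathcal{F}|\ge\binom{n-1}{2}+2$ to locate a vertex $y$ of very high degree, forcing the link $L_y$ on $n-1$ vertices to contain many triangles; each triangle $\{u,v,w\}\subseteq L_y$ combined with a triple $\{u,v,w\}\in\mathcal{F}_{\bar y}$ (which counting guarantees to exist) realizes four of the eight trace patterns on $\{u,v,w\}$, and the remaining density should force the single- and empty-intersection patterns to appear as well.

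The main obstacle is Case A. The inductive hypothesis controls only the \emph{size} of $\mathcal{F}_{\bar{x^{*}}}$, not its structure, and by the results of Mubayi and Zhao the extremal $(n-1)$-vertex VC-$2$ systems are highly non-unique. The argument must therefore operate uniformly across all extremal configurations, most naturally via a trace-counting or shifting argument that treats the $n-1$ link edges at $x^{*}$ jointly with the triple structure of $\mathcal{F}_{\bar{x^{*}}}$, without leaning on the exact shape of $\mathcal{F}_{\bar{x^{*}}}$. Case B, while still nontrivial, should yield to the dense-link-graph and shattering-through-triangles tactic above, iterated if needed by passing from $\mathcal{F}$ to sub-families where the inductive hypothesis applies again.
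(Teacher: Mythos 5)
Your opening step is correct and matches the paper: deleting a vertex, applying the inductive hypothesis to $\mathcal{F}_{\bar x}$, and concluding that every vertex has degree at least $n-1$ is exactly the paper's first structural claim (\cref{claim:LargeDegree}). Everything after that, however, is a sketch of hoped-for arguments rather than a proof, and the two cases you propose do not close. In Case A you yourself identify the crux --- showing that \emph{any} $n-1$ link edges attached to \emph{any} extremal $(n-1)$-vertex system force a shattered triple --- and then defer it to ``a careful double counting should show''; that is precisely the content that needs to be supplied, and it is far from routine because the extremal $(n-1)$-vertex systems are highly non-unique and the excess to be ruled out is only an additive constant. In Case B the counting is too weak to do anything: $\sum_x d(x)\ge n^2$ gives only $|\mathcal{F}|\ge n^2/3$, which is weaker than the standing assumption $|\mathcal{F}|\ge\binom{n-1}{2}+2$, so it locates no new structure. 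The claim that a high-degree vertex forces ``many triangles'' in its link is false as stated (a link graph with $\Theta(n)$, or even up to $n^2/4$, edges can be triangle-free), and even when a triangle $\{u,v,w\}\subseteq L_y$ coexists with $\{u,v,w\}\in\mathcal{F}_{\bar y}$, only the four traces of size $\ge 2$ on $\{u,v,w\}$ are realized; producing the three singletons and the empty trace is exactly where all the difficulty lies, and ``the remaining density should force'' them is not an argument.

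The structural reason these tactics cannot work is that the statement to be proved separates $\binom{n-1}{2}+2$ from $\binom{n-1}{2}+1$, an additive gap of $1$, so any approach based on crude degree or triangle counting will lose constants and fail. The paper instead introduces, for each $F_i$, a maximal witness $B_i\subsetneq F_i$ (a trace on $F_i$ not realized by $\mathcal{F}$), proves via \cref{lemma:2222} and the delicate \cref{lemma: NoTwoB1} that each witness value occurs at most once, and thereby obtains the exact identity $|\mathcal{F}|=|\mathcal{B}|+|L|+|\mathcal{C}|$ of \cref{cor:MainInequality}. The contradiction is then extracted by a long case analysis on $\Delta(\mathcal{B})$ and on the transversal number of $\mathcal{C}$. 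Your proposal contains no analogue of the witness machinery or of any exact bookkeeping device, so the gap is not a matter of missing details but of a missing central idea.
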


\subsection{Notations and preliminary structural results}

Note that when \( \mathcal{F} \subseteq \binom{[7]}{3} \) has VC-dimension at most \(2\), by~\cref{lemma:7316}, its size satisfies  
\[
|\mathcal{F}| \le \binom{7 - 1}{2} + 1=16.
\]  
Our proof proceeds by induction on \(n\). Assume that for all integers \(7 \le n' \le n - 1\), any set system \(\mathcal{F}' \subseteq \binom{[n']}{3}\) with VC-dimension at most \(2\) satisfies  
\[
|\mathcal{F}'| \le \binom{n' - 1}{2} + 1.
\]  
Now consider \(\mathcal{F} \subseteq \binom{[n]}{3}\) with VC-dimension at most \(2\). Our goal is to show that  
\[
|\mathcal{F}| \le \binom{n - 1}{2} + 1.
\]

Let \(\mathcal{F} = \{F_1,F_2, \ldots, F_m\} \subseteq \binom{[n]}{3}\) be a set system with VC-dimension at most \(2\). In remainder of the proof, we always assume that
\[
m \ge \binom{n - 1}{2} + 2
\]  
and aim to derive a contraction.

By the definition of VC-dimension, for each \(F_i\in\mathcal{F}\) (with \(i\in [m]\)) there exists at least one proper subset \(B_i\subsetneq F_i\) such that  \(F\cap F_i\neq B_i\) for every \(F\in\mathcal{F}\). Henceforth, we assume that for each \(i\in [m]\),  \(B_i\) is chosen to have the largest cardinality among all available options. Usually we refer to this set $B_{i}$ as the \emph{witness} of $F_{i}$. In~\cite{2025CombProof}, the following auxiliary result was established using the sunflower lemma~\cite{2024BVCSunflower,1960Sunflower, 2023BVCSunflower}, and the proof was inspired by~\cite[Lemma 3]{2007JAC} 

\begin{lemma}[\cite{2025CombProof}]\label{lemma:2222}
Let \(\mathcal{F}=\{F_{1},F_{2},\ldots,F_{m}\}\subseteq \binom{[n]}{d}\) be a set system with VC-dimension at most \(d\) and \(\{B_{1},\ldots,B_{m}\}\) be selected as above. Then the followings hold.
\begin{enumerate}
    \item[\textup{(1)}] For each subset $A\subseteq [n]$, the number of indices $i\in [m]$ with $B_{i}=A$ is at most $C_d$, where $C_d$ is a constant. Consequently, for each integer $0\le s\le d$, the number of indices $i\in [m]$ with $|B_{i}|=s$ is at most $C_d \cdot\binom{n}{s}$.
    \item[\textup{(2)}] In particular, when $|A|=d$, the number of indices $i\in [m]$ with $B_{i}=A$ is at most one.
\end{enumerate}
\end{lemma}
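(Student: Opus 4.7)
The plan is to analyze the common-witness families $\mathcal{F}_A := \{F_i : B_i = A\}$ using the sunflower lemma, in the style of \cite[Lemma 3]{2007JAC}.

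For part (2), if two distinct indices $i \neq j$ shared the same witness $B_i = B_j = A$ with $|A|$ at its maximum possible value, then $F_i$ and $F_j$ would each consist of $A$ together with exactly one extra element, and these extra elements must differ. Consequently $F_j \cap F_i = A$, which realizes $A$ as an intersection on $F_i$ and directly contradicts the definition of $B_i$ as an unrealized proper subset. Hence at most one index can correspond to such an $A$.

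For part (1), suppose for contradiction that $|\mathcal{F}_A|$ exceeds a constant $C_d$ to be chosen. Since every member of $\mathcal{F}_A$ contains $A$, the residuals $\{F_i \setminus A : F_i \in \mathcal{F}_A\}$ form a uniform family of size depending only on $d$. Applying the recent improved sunflower lemma of \cite{2023BVCSunflower, 2024BVCSunflower}, we extract a sunflower with arbitrarily many petals and some core $Y \subseteq [n]\setminus A$, so that the sunflower members take the form $A \cup Y \cup P_i$ with the petals $P_i$ pairwise disjoint. If $Y = \emptyset$, any two such sets intersect exactly in $A$, immediately contradicting the witness property of $B_i = A$. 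If $Y \neq \emptyset$, the maximality of $|B_i|$ forces the strictly larger proper subset $A \cup Y \subsetneq F_i$ to be realized by some $F \in \mathcal{F}$; a further pigeonhole over sufficiently many disjoint petals then produces two sunflower members whose simultaneous interaction with $F$ yields conflicting intersection patterns, again contradicting the witness choice. Summing this per-$A$ bound over all subsets of size $s$ produces the $C_d \binom{n}{s}$ estimate.

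The main obstacle is the case $Y \neq \emptyset$: the direct empty-core contradiction breaks down, and one must carefully combine sunflower disjointness with the maximality of the witness cardinality—possibly via a recursive peeling argument in which each step replaces $A$ by a strictly larger realized subset until an empty-core sunflower is found. The Alweiss--Lovett--Wu--Zhang improvements to the sunflower threshold are what keep $C_d$ an absolute constant depending only on $d$, rather than growing with the ground set.
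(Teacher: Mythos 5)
First, a framing point: the paper does not actually prove \cref{lemma:2222} --- it is quoted from \cite{2025CombProof} --- so there is no in-paper argument to compare against. Your overall strategy, bounding each common-witness class $\{F_i : B_i = A\}$ via the sunflower lemma combined with the maximality of $|B_i|$, is precisely the strategy the paper attributes to that reference (and to \cite[Lemma 3]{2007JAC}). Your proof of part (2) is complete and correct, reading the statement, as you implicitly and correctly do, with $\mathcal{F}\subseteq\binom{[n]}{d+1}$ (the ``$\binom{[n]}{d}$'' in the statement is evidently a typo, since witnesses are proper subsets of size up to $d$).

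Part (1), however, has a genuine gap in the non-empty-core case, which you have essentially flagged yourself. The step you propose --- realize the larger proper subset $A\cup Y$ by some $F\in\mathcal{F}$ and then pigeonhole over the petals --- cannot yield a contradiction: any $F$ with $F\cap F_{i_1}=A\cup Y$ contains $Y$, hence $F\cap F_{i_k}\supseteq A\cup Y\supsetneq A$ for \emph{every} sunflower member $F_{i_k}=A\cup Y\cup P_k$, so no intersection ever equals the witness $A$, regardless of how many petals you take; ``conflicting intersection patterns'' never materialize. The repair is to apply maximality to the other natural enlargement of $A$: since $Y\neq\emptyset$ and $P_1\neq\emptyset$, we have $A\subsetneq A\cup P_1\subsetneq F_{i_1}$, so by maximality of $|B_{i_1}|$ there exists $H\in\mathcal{F}$ with $H\cap F_{i_1}=A\cup P_1$. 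This $H$ contains $A\cup P_1$, is disjoint from $Y$, and has at most $d+1-|A|-|P_1|\le d$ elements outside $A\cup P_1$; hence if the sunflower has at least $d+3$ petals, $H$ misses some petal $P_k$ with $k\ge 2$, giving $H\cap F_{i_k}=A$ and contradicting $B_{i_k}=A$. With this fix no recursive peeling is needed. Finally, your closing remark is off: the classical Erd\H{o}s--Rado bound \cite{1960Sunflower} already gives a constant $C_d$ independent of $n$, since both the residual uniformity $d+1-|A|$ and the required number of petals depend only on $d$; the improvements of \cite{2023BVCSunflower,2024BVCSunflower} merely sharpen the dependence on $d$.
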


For each \(x \in [n]\), define  
\[
\mathcal{F}(x) := \{F\in\mathcal{F} : x \in F\}
\]  
and  
\[
\mathcal{F}(\bar{x}) := \{F\in\mathcal{F}:x\notin F\}.
\]
We also define
\[\mathcal{G}(x):=\{F\setminus\{x\}:F\in\mathcal{F}(x)\}.\]
Clearly, $|\mathcal{F}(x)|=|\mathcal{G}(x)|$ for any $x\in [n]$. Similarly, for distinct $x,y\in [n]$, we define 
\[
\cF(x,y) = \{F\in\mathcal{F} : \{x,y\}\subseteq F\}. 
\]

We begin with a simple observation derived from our assumptions, which serves as a key density condition throughout the proof.
\begin{claim}\label{claim:LargeDegree}
    For any $x\in [n]$, $|\mathcal{F}(x)|\ge n-1$.
\end{claim}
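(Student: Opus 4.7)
The plan is to exploit the inductive hypothesis together with the contradictory assumption $m \ge \binom{n-1}{2}+2$. The key observation is that $|\cF(x)| = m - |\cF(\bar x)|$, so a lower bound on $|\cF(x)|$ will follow from an upper bound on $|\cF(\bar x)|$.

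First I would note that $\cF(\bar x) \subseteq \binom{[n]\setminus\{x\}}{3}$ is still a $3$-uniform set system on the $(n-1)$-element ground set $[n]\setminus\{x\}$, and that passing to a subfamily cannot increase VC-dimension, so $\cF(\bar x)$ has VC-dimension at most $2$. Since we are in the induction step with $n \ge 8$, we have $n-1 \ge 7$, so the inductive hypothesis (or the base case \cref{lemma:7316} when $n=8$) applies and yields
\[
|\cF(\bar x)| \le \binom{n-2}{2} + 1.
\]

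Combining this with the standing assumption $m \ge \binom{n-1}{2}+2$, I would compute
\[
|\cF(x)| \;=\; m - |\cF(\bar x)| \;\ge\; \binom{n-1}{2} + 2 - \binom{n-2}{2} - 1 \;=\; (n-2) + 1 \;=\; n-1,
\]
using the Pascal identity $\binom{n-1}{2} - \binom{n-2}{2} = n-2$. This yields the claim.

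There is no real obstacle here: the argument is simply a one-line deletion/induction estimate, and the only thing to be careful about is checking that the inductive hypothesis is applicable, i.e.\ that the ground-set size after deleting $x$ is still at least $7$, which is guaranteed by $n \ge 8$.
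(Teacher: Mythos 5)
Your proposal is correct and is essentially the paper's own argument: both rest on the decomposition $|\mathcal{F}| = |\mathcal{F}(x)| + |\mathcal{F}(\bar{x})|$ together with the inductive bound $|\mathcal{F}(\bar{x})| \le \binom{n-2}{2}+1$. The only difference is presentational (you compute the bound directly, the paper phrases it as a contradiction), so there is nothing further to add.
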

\begin{poc}
   Suppose that \( |\mathcal{F}(x)| \le n - 2 \) for some \( x \in [n] \). Since \( \mathcal{F}(\bar{x}) \subseteq \binom{[n] \setminus \{x\}}{3} \) has VC-dimension at most \(2\), the inductive hypothesis gives  
\[
|\mathcal{F}(\bar{x})| \le \binom{n - 2}{2} + 1.
\]  
Therefore,  
\[
|\mathcal{F}| = |\mathcal{F}(x)| + |\mathcal{F}(\bar{x})| \le (n - 2) + \left( \binom{n - 2}{2} + 1 \right) = \binom{n - 1}{2} + 1,
\]
a contradiction.
\end{poc}

Our proof makes essential use of the following three structures.

\begin{defn}\label{def:structure    }
    Let $\mathcal{F}=\{F_{1},F_{2},\ldots,F_{m}\}\subseteq\binom{[n]}{3}$ be a set system with VC-dimension at most $2$ and $\{B_{1},B_{2},\dots,B_{m}\}$ be selected as above. We define the following set families.
    \begin{enumerate}
        \item[\textup{(1)}] Define $\mathcal{B}$ as the collection of witness sets of size two:  \[
\mathcal{B} := \{ B_i : i \in [m],\ |B_i| = 2 \}.
\]  
\item[\textup{(2)}] Define $L$ as the set of elements in $[n]$ which serves as singleton witness: \[L=\bigg\{x\in [n]:\textup{there\ exists\ some\ }i\in [m]\ \textup{with\ }B_{i}=\{x\}\bigg\}.\]

\item[\textup{(3)}] Define $\mathcal{C}$ as the collection of sets in $\mathcal{F}$ whose witness is empty set: \[\mathcal{C}:=\{F_{i}\in\mathcal{F}:B_{i}=\emptyset\}.\]
    \end{enumerate}
\end{defn}
Since $\mathcal{B}$ is $2$-uniform, we can regard it as a graph, whose vertex set is $[n]$ and edge set is $\mathcal{B}$. Moreover by~\cref{lemma:2222}(2), \(\mathcal{B}\) is a simple graph. As is commonly used in graph theory, we denote \( N_{\mathcal{B}}(x) \) as the set of neighbors of the vertex \( x \) in the graph \( \mathcal{B} \).

Based on~\cref{claim:LargeDegree}, we have the following observation.
\begin{claim}\label{claim:LowDegreeInB}
    For any $x\in L$, $|N_{\mathcal{B}}(x)|\le n-4$.  
\end{claim}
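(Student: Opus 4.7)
The plan is to combine two structural facts about $\mathcal{G}(x)$, viewed as a graph on $[n]\setminus\{x\}$: one derived from the singleton witness through $x$, the other from each $y\in N_{\mathcal{B}}(x)$. Together with the lower bound $|\mathcal{G}(x)|=|\mathcal{F}(x)|\ge n-1$ supplied by \cref{claim:LargeDegree}, these will force $|N_{\mathcal{B}}(x)|\le n-4$.

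First I would fix some $F_i=\{x,a,b\}\in\mathcal{F}$ with $B_i=\{x\}$ and unpack the maximality of this singleton witness. That $\{x\}$ itself is a witness means no $F\in\mathcal{F}$ intersects $F_i$ exactly in $\{x\}$; equivalently, every edge of $\mathcal{G}(x)$ meets $\{a,b\}$, so $\mathcal{G}(x)$ is contained in the double star centered at $a$ and $b$. On the other hand, because $\{x\}$ is the \emph{largest} witness, each of $\{x,a\}$ and $\{x,b\}$ must be realized as $F\cap F_i$ for some $F\in\mathcal{F}$; this supplies edges $\{a,c\},\{b,d\}\in\mathcal{G}(x)$ with $c,d\in[n]\setminus\{x,a,b\}$, and together with the edge $\{a,b\}\in\mathcal{G}(x)$ coming from $F_i$ itself gives $\deg_{\mathcal{G}(x)}(a),\deg_{\mathcal{G}(x)}(b)\ge 2$.

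Next, any $y\in N_{\mathcal{B}}(x)$ arises as the second element of some witness $B_j=\{x,y\}$ attached to $F_j=\{x,y,z\}\in\mathcal{F}$. The requirement that no $F\in\mathcal{F}$ satisfies $F\cap F_j=\{x,y\}$ precisely excludes the sets $\{x,y,w\}$ with $w\ne z$, so $z$ is the unique neighbor of $y$ in $\mathcal{G}(x)$ and $\deg_{\mathcal{G}(x)}(y)=1$. In particular $a,b\notin N_{\mathcal{B}}(x)$, so already $N_{\mathcal{B}}(x)\subseteq[n]\setminus\{x,a,b\}$. If this inclusion were an equality, every $y\in[n]\setminus\{x,a,b\}$ would have degree exactly $1$ in $\mathcal{G}(x)$, so summing contributes exactly $n-3$ edges incident to $[n]\setminus\{x,a,b\}$, and the only remaining edge of $\mathcal{G}(x)$ is $\{a,b\}$, yielding $|\mathcal{G}(x)|=n-2$ and contradicting $|\mathcal{G}(x)|\ge n-1$. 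Hence $|N_{\mathcal{B}}(x)|\le n-4$. The only delicate step is the translation of the singleton-witness hypothesis at $x$ into the double-star structure of $\mathcal{G}(x)$, and I do not foresee any substantive obstacle beyond it.
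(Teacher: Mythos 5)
Your proof is correct. It rests on the same core facts as the paper's argument: the link $\mathcal{G}(x)$ is a double star centered at the two non-$x$ elements $a,b$ of the set $F_i$ with $B_i=\{x\}$, any $v\in N_{\mathcal{B}}(x)$ has degree exactly $1$ in $\mathcal{G}(x)$, and $|\mathcal{G}(x)|\ge n-1$ by \cref{claim:LargeDegree}. The execution differs slightly: the paper applies the density bound up front to produce a vertex $w$ adjacent to both centers, so that $a$, $b$, and $w$ all have degree at least $2$ in $\mathcal{G}(x)$ and are excluded from $N_{\mathcal{B}}(x)$ directly, whereas you exclude only the two centers --- using the maximality of the singleton witness, a step the paper's proof of this claim does not need --- and then rule out $|N_{\mathcal{B}}(x)|=n-3$ by the edge count $1+(n-3)=n-2<n-1$. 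That final count is the same pigeonhole the paper uses to find $w$, run in the contrapositive direction, so both routes are sound and of comparable length.
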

\begin{poc}
  Recall that $\mathcal{G}(x):=\{F\setminus\{x\}:F\in\mathcal{F}(x)\}$, we first observe that the transversal number of \(\mathcal{G}(x)\) is at most \(2\). Indeed, let \(F_{i} = \{x, y, z\}\) correspond to \(B_{i} = \{x\}\). By the definition of \(B_{i}\), the set \(\{y, z\}\) is a transversal of \(\mathcal{G}(x)\). Since \(|\mathcal{G}(x)|=|\mathcal{F}(x)| \ge n - 1\), there exists some \(w \in [n] \setminus \{x, y, z\}\) such that both \(\{w, y\}\) and \(\{w, z\}\) belong to \(\mathcal{G}(x)\). In other words,  
\[
\{x, y, z\}, \quad \{x, y, w\}, \quad \text{and} \quad \{x, z, w\} \in \mathcal{F}.
\]  
Therefore,  
\[
\{x, y\}, \quad \{x, z\}, \quad \text{and} \quad \{x, w\} \notin \mathcal{B},
\]  
which implies  
\[
|N_{\mathcal{B}}(x)| \le n - 4.
\]
\end{poc}

\subsection{Technical overview}
The proof of the main theorem proceeds by induction on \(n\), with the base case \(n = 7\) verified by explicit computation. The inductive step (see~\cref{Induction}) assumes the upper bound holds for \(n - 1\), and aims to derive a contradiction if a 3-uniform set system \(\mathcal{F} \subseteq \binom{[n]}{3}\) with VC-dimension at most 2 satisfies \(|\mathcal{F}| \ge \binom{n-1}{2} + 2\). While the proof is self-contained and elementary, it involves a considerable amount of detailed case analysis. To help the reader better understand the proof, we provide here an overview of the entire argument, and highlight several auxiliary results that we consider important.

The main strategy relies on a sophisticated structural analysis. For each \(F_i \in \mathcal{F}\), a witness subset \(B_i \subsetneq F_i\) is chosen as large as possible such that no other set in \(\mathcal{F}\) intersects \(F_i\) exactly in \(B_i\). By~\cref{lemma:2222} (based on prior work~\cite{2025CombProof} using the sunflower lemma), the number of times a given \(B_i\) can appear is bounded. In particular, the collection of 2-element witnesses \(\{B_i : |B_i| = 2\}\) forms a simple graph \(\mathcal{B}\).

The first key technical insight is~\cref{claim:LargeDegree}, which asserts that every element must appear in at least \(n - 1\) sets of \(\mathcal{F}\), enforcing a strong density condition. This motivates the introduction of several structural tools: the auxiliary graph \(\mathcal{B}\), the set \(L\) of vertices appearing as singleton witnesses \(B_i = \{x\}\), along with the collection \(\mathcal{C}\) of sets \(F_i\) with empty witnesses \(B_i = \emptyset\).

A central component of the proof is~\cref{lemma: NoTwoB1}, which shows that no element can occur in more than one singleton witness \(B_i = \{x\}\). Its proof is intricate and builds on lots of observations. A key outcome of this lemma is the concise identity given in~\cref{cor:MainInequality}:
\[
|\mathcal{F}| = |\mathcal{B}| + |L| + |\mathcal{C}|,
\]
which gives us tighter control on the size of \(\mathcal{F}\). The remainder of the proof is divided into two main cases according to whether $\mathcal{C}$ is empty. 

In~\cref{subsection:CisEmpty}, we analyze the case $\mathcal{C} = \emptyset$. Here we want to understand the structural of the graph $\mathcal{B}$ formed by witness sets of size two. Using the size constraint on the links $\cF(x)$, we show that $\mathcal{B}$ cannot be too dense. We approach this by examining the maximum degree $\Delta(\mathcal{B})$ and consider the cases $\Delta = n-2$, $n-3$, and $n-4$. Each of these cases requires tailored arguments based on neighborhood properties, set intersections, and auxiliary counting estimates. At the end of this analysis we can already prove that
$$\abs{\cF} \leq \binom{n - 1}{2} + 1 + \abs{\mathcal{C}} \leq \binom{n - 1}{2} + O(1).$$
However, lots of work remain to reduce the $O(1)$ to $1$. To achieve this, we perform a sophisticated structural analysis on $\mathcal{C}$.

In~\cref{subsection:StructralofC}, we analyze interactions among $\mathcal{B}$, $L$, and $\mathcal{C}$ when $\mathcal{C}\neq\emptyset$. Key claims like~\cref{claim:MissTwoedges} and~\cref{claim:XYZLE2} quantify precisely how $\mathcal{C}$ restricts the structure and size of $\mathcal{B}$ and $L$. We also systematically identify and exclude forbidden substructures in $\mathcal{C}$, such as 2-intersecting families and linear triangle in~\cref{claim:Not2Intersection} and~\cref{claim:NoLinearTriangle}, thus significantly restricting the structure of $\mathcal{C}$.

In \cref{subsection:Transversalof C}, we analyze the structure of $\mathcal{C}$ based on its transversal number $\tau(\mathcal{C})$. Noting that $\mathcal{C}$ is intersecting, its transversal number is at most 3. We first quickly eliminate the case where the transversal number is exactly $3$, as this implies $|\mathcal{F}| \leq 19$.

The case $\tau(\mathcal{C})=2$ is involved. Here, two earlier structural properties in~\cref{claim:Not2Intersection} and~\cref{claim:NoLinearTriangle} become essential. In particular, we leverage the density condition from~\cref{claim:LargeDegree} and the greedy construction of linear triangles to derive contradictions. These tools allow us to rule out this scenario by consistently constructing forbidden substructures under the given constraints.

The most intricate case arises when $\tau(\mathcal{C})=1$. In this case, $\mathcal{C}$ must share a common element. However, its structure remains highly complex. To handle this, we draw on the density condition and the interplay among $\mathcal{B}$, $L$, and $\mathcal{C}$, as developed in~\cref{subsection:StructralofC}. We also exploit structural features of $\mathcal{C}$ itself. Crucially, we shift focus to the link graph $\mathcal{Z} = \{F \setminus \{z\} : F \in \mathcal{C}\subseteq\mathcal{F}\}$ for the center $z$ of the star. By analyzing several combinatorial properties of $\mathcal{Z}$ in~\cref{claim:PropertyZ}, including its matching number, maximum degree, and the absence of $K_{2,2}$ subgraphs, we efficiently complete the argument, avoiding exhaustive enumeration of all possible $\mathcal{Z}$'s.

\subsection{Key lemma: Structural insights of $L$}
By~\cref{lemma:2222}(1), we know that for any element \(x \in L\), there are at most \(C\) sets in \(\mathcal{F}\) whose witness is exactly \(\{x\}\), for some constant \(C > 1\). The main result of this section is~\cref{lemma: NoTwoB1}, where we improve this bound and show that the constant $C$ can be strengthened to 1. Before proving this, we begin with a preliminary step showing $C \leq 2$.
\begin{claim}\label{claim:SizeofB1}
    For each $x\in [n]$, $|\{\ell\in [m]:B_{\ell}=\{x\}\}|\le 2$. Moreover, the equality holds if and only if
    \[\mathcal{F}(x)=\big\{\{x,y,z\},\{x,y,w\},\{x,z,w\}\big\}\cup\bigg(\bigcup_{v\in [n]\setminus\{x,y,z,w\}}\{x,y,v\}\bigg),\]
    for some distinct $y, z, w \in [n] \backslash \{x\}$.
\end{claim}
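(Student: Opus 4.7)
The plan is to combine witness maximality with the density bound from \cref{claim:LargeDegree}: once we assume two indices share the witness \(\{x\}\), the maximality condition imposes strong intersection constraints on \(\mathcal{F}(x)\), and combined with \(|\mathcal{F}(x)|\ge n-1\), these constraints simultaneously force the claimed structure and rule out a third witness.

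I will first suppose \(B_i = B_j = \{x\}\) for distinct indices \(i,j\) and set \(F_i = \{x, y, z\}\). Since \(\{x\}\) is a witness for \(F_i\), no \(F \in \mathcal{F}\) satisfies \(F \cap F_i = \{x\}\), so every \(F \in \mathcal{F}(x) \setminus \{F_i\}\) must contain \(y\) or \(z\). Applying this to \(F_j\) itself, and relabeling \(y \leftrightarrow z\) if necessary, I may write \(F_j = \{x, y, w\}\) with \(w \in [n] \setminus \{x, y, z\}\). Applying the two witness conditions simultaneously to any \(F \in \mathcal{F}(x) \setminus \{F_i, F_j\}\), such an \(F\) must contain \(y\) or \(z\) (from \(F_i\)'s condition) and contain \(y\) or \(w\) (from \(F_j\)'s condition). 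The only options are \(y \in F\), giving \(F = \{x, y, v\}\) for some \(v\), or \(\{z, w\} \subseteq F\), giving \(F = \{x, z, w\}\). Consequently,
\[
\mathcal{F}(x) \subseteq \bigl\{\{x, y, v\} : v \in [n] \setminus \{x, y\}\bigr\} \cup \bigl\{\{x, z, w\}\bigr\},
\]
a family of size at most \((n-2)+1 = n-1\). Combined with the lower bound \(|\mathcal{F}(x)| \ge n - 1\) from \cref{claim:LargeDegree}, equality must hold, and \(\mathcal{F}(x)\) takes exactly the form asserted in the claim.

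To rule out a third index \(k\) with \(B_k = \{x\}\), I use the structure just derived: \(F_k\) is either \(\{x, y, v\}\) for some \(v\) or \(\{x, z, w\}\). If \(F_k = \{x, y, v\}\) with \(v \notin \{z, w\}\), then \(B_k = \{x\}\) demands every other set in \(\mathcal{F}(x)\) to contain \(y\) or \(v\), but \(\{x, z, w\} \in \mathcal{F}(x)\) contains neither. If \(F_k = \{x, z, w\}\), then \(B_k = \{x\}\) demands every other set to contain \(z\) or \(w\), but for \(n \ge 5\) there is some \(v \in [n] \setminus \{x, y, z, w\}\) with \(\{x, y, v\} \in \mathcal{F}(x)\), containing neither. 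Hence \(F_k \in \{F_i, F_j\}\), yielding \(|\{\ell : B_\ell = \{x\}\}| \le 2\) and completing the characterization.

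The main obstacle I anticipate is the careful bookkeeping in the simultaneous application of the two witness conditions, in particular the relabeling step where \(F_j\) could a priori equal \(\{x, y, w\}\) or \(\{x, z, w\}\); this is a pure symmetry but must be executed precisely so that the two constraints intersect to force the restrictive form of \(\mathcal{F}(x)\). Once this is done, the density bound from \cref{claim:LargeDegree} immediately pins down \(\mathcal{F}(x)\) on the nose, and the case analysis forbidding a third witness is routine.
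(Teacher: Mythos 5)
Your proof is correct and uses the same two ingredients as the paper: the intersection constraints forced by witness maximality and the density bound $|\mathcal{F}(x)|\ge n-1$ from \cref{claim:LargeDegree}. The only difference is organizational — you pin down the structure of $\mathcal{F}(x)$ from two witnesses first and then use it to exclude a third, whereas the paper first rules out three witnesses by showing they would force $|\mathcal{F}(x)|\le 3$ and then derives the structure in the equality case — but the argument is essentially identical.
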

\begin{poc}
  Suppose there exist \(i, j, k \in [m]\) such that  
\(
B_{i} = B_{j} = B_{k} = \{x\}.
\) Let \(F_{i} = \{x, y, z\}\), then the set \(\{y, z\}\) is a transversal of \(\mathcal{G}(x)\). By~\cref{claim:LargeDegree}, \(|\mathcal{G}(x)|=|\mathcal{F}(x)| \ge n - 1\), there exists some \(w \in [n] \setminus \{x, y, z\}\) such that $\{y,w\},\{z,w\}\in\mathcal{G}(x)$, which also yields that 
\[
\{x, y, w\},\ \{x, z, w\} \in \mathcal{F}(x).
\]  
Then \(F_{j}\) and \(F_{k}\) must be exactly \(\{x, y, w\}\) and \(\{x, z, w\}\), respectively. Otherwise, \(F_{j}\cap \{x, y, w\} \neq \{x\}\) or \(F_{k} \cap \{x, z, w\} \neq \{x\}\), contradicting the assumption that \(B_{j} = B_{k} = \{x\}\). Thus, any set in $F\in\mathcal{F}(x)$ must satisfies $|F\cap\{y,z,w\}|\ge 2$, which implies that \(
|\mathcal{F}(x)| = 3,\) contradicting the assumption that \( |\mathcal{F}(x)| \ge n - 1 \). Therefore, \(\left| \{i \in [m] : B_{i} = \{x\} \} \right| \le 2.\)

Furthermore, suppose that \(\left| \{\ell \in [m] : B_{\ell} = \{x\} \} \right| = 2\) for some \(x \in [n]\). Let \(F_i = \{x, y, z\} \in \mathcal{F}\) be one of the sets with witness \(B_i = \{x\}\). By the preceding argument, we may assume without loss of generality that the other such set is \(F_j = \{x, y, w\}\) with \(B_j = \{x\}\). Then every set \(F \in \mathcal{F}(x) \setminus \{ \{x, y, z\}, \{x, y, w\}, \{x, z, w\} \}\) must contain both \(x\) and \(y\); otherwise, its intersection with either \(\{x, y, z\}\) or \(\{x, y, w\}\) would be exactly \(\{x\}\), contradicting the definition of a witness.

Moreover, since \(|\mathcal{F}(x)| \ge n - 1\) by~\cref{claim:LargeDegree}, we conclude that \(\mathcal{F}(x)\) must be exactly  
\[
\big\{ \{x, y, z\}, \{x, y, w\}, \{x, z, w\} \big\} \cup \bigg( \bigcup_{v \in [n] \setminus \{x, y, z, w\}} \{x, y, v\} \bigg).
\]
\end{poc}
We now state the central lemma of this subsection. It asserts that, under the assumption \( |\mathcal{F}| \ge \binom{n - 1}{2} + 2 \) and the inductive hypothesis, for any fixed \(x \in [n]\), there is at most one index \(i \in [m]\) such that \(B_i = \{x\}\).
\begin{lemma}\label{lemma: NoTwoB1}
   Let $\mathcal{F}=\{F_{1},\ldots,F_{m}\}\subseteq\binom{[n]}{3}$ be a set system with VC-dimension at most $2$ and $\{B_{i}\}_{i=1}^{m}$ be selected based on the above rules. If $m\ge\binom{n-1}{2}+2$, then for any $x\in [n]$, we have \(\left| \{i \in [m] : B_{i} = \{x\} \} \right| \le 1.\)
\end{lemma}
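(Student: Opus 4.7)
The plan is to argue by contradiction, supposing that there exist distinct $i, j \in [m]$ with $B_i = B_j = \{x\}$. By~\cref{claim:SizeofB1}, the family $\mathcal{F}(x)$ is then forced into the explicit shape
\[
\mathcal{F}(x) = \{\{x,y,z\},\{x,y,w\},\{x,z,w\}\} \cup \{\{x,y,v\} : v \in V\},
\]
where $V := [n] \setminus \{x,y,z,w\}$, so $|\mathcal{F}(x)| = n-1$ exactly. Combined with the hypothesis $m \ge \binom{n-1}{2}+2$ and the inductive hypothesis applied to $\mathcal{F}(\bar{x}) \subseteq \binom{[n]\setminus\{x\}}{3}$ (valid since $n-1 \ge 7$), this yields $|\mathcal{F}(\bar{x})| = \binom{n-2}{2}+1$ and $m = \binom{n-1}{2}+2$ exactly.

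The central observation is that the link graph $\mathcal{G}(x)$, viewed as a graph on $[n]\setminus\{x\}$, \emph{shatters} the pair $\{z, w\}$: the four intersection patterns $\emptyset, \{z\}, \{w\}, \{z,w\}$ are realized by the edges $\{y, v\}$ (for any $v \in V$), $\{y, z\}$, $\{y, w\}$, and $\{z, w\}$, respectively. Consequently, to prevent $\mathcal{F}$ from shattering the triple $\{x, z, w\}$, the family $\mathcal{F}(\bar{x})$ must omit at least one pattern with respect to $\{z, w\}$: defining $T_i := \{F \in \mathcal{F}(\bar{x}) : |F \cap \{z,w\}| = i\}$, at least one of $T_0, T_1, T_2, T_3$ is empty. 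Additionally, the maximality of $B(\{x,y,z\}) = \{x\}$, together with the fact that the candidates $\{x,y\}$ and $\{x,z\}$ are invalidated by $\{x,y,w\}$ and $\{x,z,w\}$ respectively, renders $\{y,z\}$ an invalid witness, forcing $\{y, z, u\} \in \mathcal{F}(\bar{x})$ for some $u \in \{w\} \cup V$; symmetrically $\{y, w, u'\} \in \mathcal{F}(\bar{x})$ for some $u' \in \{z\} \cup V$.

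I would then exploit these constraints by invoking~\cref{claim:LargeDegree} to derive $|\mathcal{F}(\bar{x}) \cap \mathcal{F}(v)| \ge n-2$ for every $v \in V$ and $|\mathcal{F}(\bar{x}) \cap \mathcal{F}(z)|, |\mathcal{F}(\bar{x}) \cap \mathcal{F}(w)| \ge n-3$; these follow from $|\mathcal{F}(v)|, |\mathcal{F}(z)|, |\mathcal{F}(w)| \ge n-1$ together with $|\mathcal{F}(x) \cap \mathcal{F}(v)| = 1$ and $|\mathcal{F}(x) \cap \mathcal{F}(z)| = |\mathcal{F}(x) \cap \mathcal{F}(w)| = 2$. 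The proof then completes by case analysis on the empty $T_i$. In Case $T_3 = \emptyset$ the forced $\{y,z,u\}$ has $u \in V$, yielding $\{y, z, v_1\} \in \mathcal{F}$ for some $v_1$; the shattering constraint on $\{y, z, v_1\}$ then cuts down $\mathcal{F}(\bar{x}) \cap \mathcal{F}(z)$ enough to clash with $|\mathcal{F}(\bar{x}) \cap \mathcal{F}(z)| \ge n-3$. Cases $T_1 = \emptyset$ and $T_2 = \emptyset$ force $u = w$ (resp.~$u' = z$), hence $\{y, z, w\} \in \mathcal{F}$; combined with the forced saturation $|T_3| = n-3$ (since $\mathcal{F}(\bar{x}) \cap \mathcal{F}(z) \subseteq T_3$), a shattering analysis of $\{y, z, w\}$ yields the contradiction. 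Finally, in Case $T_0 = \emptyset$ every $F \in \mathcal{F}(\bar{x})$ meets $\{z, w\}$, so $\mathcal{F}(\bar{x}) \cap \mathcal{F}(\bar{z})$ lies in the star at $w$ inside $[n] \setminus \{x,z\}$, yielding the sharper $|\mathcal{F}(\bar{x}) \cap \mathcal{F}(z)|, |\mathcal{F}(\bar{x}) \cap \mathcal{F}(w)| \ge n-2$; combining with the forced $\{y,z,u\}, \{y,w,u'\}$ and the $v$-link bounds, one then locates a shattered triple.

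The main obstacle will be the sub-case analysis inside each of the four $T_i$-cases: pure counting is often insufficient, so one must identify by hand the correct triple whose shattering yields the intended contradiction. A secondary subtlety is the boundary case $n = 8$, where a second application of induction would use a $6$-element ground set, outside the inductive range $n' \ge 7$; this boundary must be handled via direct analysis or an appeal to the precomputed bound $|\mathcal{F}'| \le 13$ at $n' = 6$.
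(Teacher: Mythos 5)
Your setup is sound and matches the paper's: invoking \cref{claim:SizeofB1} to pin down $\mathcal{F}(x)$, observing that the four patterns of $\{x,z,w\}$ containing $x$ are all realized inside $\mathcal{F}(x)$, and therefore casing on which subset of $\{z,w\}$ is the missing pattern (this is exactly the paper's case analysis on the witness $B_\ell$ of $F_\ell=\{x,z,w\}$, which satisfies $B_\ell\subseteq\{z,w\}$). The forced sets $\{y,z,u\}$ and $\{y,w,u'\}$ from the maximality of $B_i=\{x\}$ are also correct. The gap is in the claim that each case terminates in a locally shattered triple. In your cases $T_1=\emptyset$ / $T_2=\emptyset$, the structure you derive --- $\{y,z,w\}\in\mathcal{F}$ together with $\mathcal{F}(\bar{x})\cap\mathcal{F}(z)\subseteq T_3$, hence $\{z,w,s\}\in\mathcal{F}$ for all $s\notin\{x,z,w\}$ --- is precisely the configuration of \cref{claim:StructureFxFz}, and it does \emph{not} shatter $\{y,z,w\}$: every set in $\mathcal{F}(z)$ you have identified contains $y$ or $w$, so the pattern $\{z\}$ is unrealized and $\{y,z,w\}$ legitimately has a witness. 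This configuration is locally consistent; the contradiction in the paper comes only afterwards, from a \emph{global} count (\cref{claim:SmallIntersection} shows every $F_h$ with $|F_h\cap\{x,y,z,w\}|\le 1$ has a size-$2$ witness in $\binom{[n]\setminus\{x,y,z,w\}}{2}$, and since size-$2$ witnesses are used at most once, $|\mathcal{F}|\le\binom{n-4}{2}+|\mathcal{F}(x)\cup\mathcal{F}(z)|+(n-4)=\binom{n-1}{2}+1$). Your proposal omits this entire second half, and no local shattering argument can replace it.

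The other two cases have the same problem in stronger form. For $T_3=\emptyset$ (i.e.\ $\{y,z,w\}\notin\mathcal{F}$), knowing that $\mathcal{F}(\bar x)\cap\mathcal{F}(z)$ consists of at least $n-3$ sets of the form $\{z,a,b\}$ with $a,b\in\{y\}\cup V$ is not in itself contradictory, and the witness of your $\{y,z,v_1\}$ can simply be $\{z,v_1\}$; the paper handles this case (\cref{claim:yzw}) by the non-local trick of replacing all of $\mathcal{F}(x)$ by the single set $\{y,z,w\}$ and applying the inductive hypothesis to the resulting family on $[n]\setminus\{x\}$. For $T_0=\emptyset$ ($\{x,z,w\}$ a transversal), the paper needs over a page of counting (bounding $|\mathcal{B}|$ by $\binom{n}{2}-(2n-2)$ and bounding the sets with witnesses of size $\le 1$ by $8$), and "one then locates a shattered triple" does not substitute for it. Your flagged worry about a second level of induction at $n=8$ is a further symptom that the argument as sketched is not yet a proof.
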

\begin{proof}[Proof of~\cref{lemma: NoTwoB1}]
 Suppose there is some $x\in [n]$ such that \(\left| \{i \in [m] : B_{i} = \{x\} \} \right| = 2.\) By~\cref{claim:SizeofB1}, there exist elements $y,z,w\in [n]\setminus\{x\}$ such that $F_{i}=\{x,y,z\}$ and $F_{j}=\{x,y,w\}$ satisfy $B_{i}=B_{j}=\{x\}$, and
 \[\mathcal{F}(x)=\big\{\{x,y,z\},\{x,y,w\},\{x,z,w\}\big\}\cup\bigg\{\{x,y,v\}: v \in [n]\setminus\{x,y,z,w\}\bigg\}.\]
 
 We now prove a key claim.
 \begin{claim}\label{claim:yzw}
     $\{y,z,w\}$ belongs to $\mathcal{F}$.
 \end{claim}
 \begin{poc}
 Suppose $\{y,z,w\}\notin\mathcal{F}$, we claim that the set system
 \[\mathcal{H}:=\big(\mathcal{F}\setminus\mathcal{F}(x)\big)\cup\{y,z,w\}\subseteq\binom{[n]\setminus\{x\}}{3}\]
has VC-dimension at most $2$. We verify this as follows.
\begin{enumerate}
    \item Suppose there exists some \(F_{k} \in \mathcal{F} \setminus \mathcal{F}(x)\) such that \(F_{k} \cap \{y, z, w\} = B_{k}.\) Then \(B_{k}\) must be contained in some \(C \in \big\{ \{y, z\}, \{y, w\}, \{z, w\} \big\}\). On the other hand, we already know that  
\[
\{x, y, z\}, \quad \{x, y, w\}, \quad \text{and} \quad \{x, z, w\} \in \mathcal{F}(x) \subseteq \mathcal{F}.
\]  
Therefore,  
\[
F_{k} \cap (C \cup \{x\}) = F_{k} \cap C = B_{k},
\]  
which contradicts the choice of \(B_{k}\). This shows that any set in $\mathcal{F}\setminus\mathcal{F}(x)$ is not shattered by $\cH$.

    \item It remains to show that \(\{y, z, w\}\) is not a shattered set in \(\mathcal{H}\). Let  
\[
F_{\ell} = \{x, z, w\} \in \mathcal{F}(x) \subseteq \mathcal{F}.
\]  
Moreover, observe that  
\[
\{x\} = \{x, z, w\} \cap \{x, y, v\}, \quad \text{for some } v \in [n] \setminus \{x, y, z, w\},
\]  
\[
\{x, z\} = \{x, z, w\} \cap \{x, y, z\},
\]  
and  
\[
\{x, w\} = \{x, z, w\} \cap \{x, y, w\}.
\]  
These intersections imply that
\begin{equation}\label{eq:Bell}
    B_{\ell} \subseteq \{z, w\}.
\end{equation}
Now suppose there exists \(F \in \mathcal{F} \setminus \mathcal{F}(x)\) such that  
\[
F \cap \{y, z, w\} = B_{\ell}.
\]  
Then we would have  
\[
F \cap \{x, z, w\} = B_{\ell},
\]  
contradicting the choice of \(B_{\ell}\).
\end{enumerate}
Since $\mathcal{H}\subseteq\binom{[n]\setminus\{x\}}{3}$ has VC-dimension at most $2$, by inductive hypothesis, we conclude that
\[|\mathcal{F}|=|\mathcal{H}|+|\mathcal{F}(x)|-1\le\binom{n-2}{2}+1+n-1-1=\binom{n-1}{2}+1,\]
 a contradiction to $|\mathcal{F}|\ge\binom{n-1}{2}+2$. 
 \end{poc}
By~\cref{claim:yzw}, we have 
 \[
\big\{ \{x, y, z\}, \{x, y, w\}, \{x, z, w\},\{y,z,w\} \big\} \cup \bigg( \bigcup_{v \in [n] \setminus \{x, y, z, w\}} \{x, y, v\} \bigg)\subseteq\mathcal{F}.
\] 
Our next goal is to determine the precise structure of $\mathcal{F}(x)\cup\mathcal{F}(z)$.
\begin{claim}\label{claim:StructureFxFz}
    Swapping the role of $z$ and $w$ if necessary, we have
   \[\mathcal{F}(x)\cup\mathcal{F}(z)=\big\{ \{x, y, z\}, \{x, y, w\}, \{x, z, w\},\{y,z,w\} \big\} \cup \bigg( \bigcup_{v \in [n] \setminus \{x, y, z, w\}} \big(\{x, y, v\}\cup\{z,w,v\}\big) \bigg).\]
\end{claim}
\begin{poc}
Recall that for \(F_{\ell} = \{x, z, w\}\), we have \(B_{\ell} \subseteq \{z, w\}\), as established in~\eqref{eq:Bell} of the proof of~\cref{claim:yzw}. Also note that \(\{x, z, w\} \cap \{y, z, w\} = \{z, w\}\), therefore $B_{\ell}\neq\{z,w\}$.

We first consider the case that \(B_{\ell} \neq \emptyset\), it then follows that \(B_{\ell}\) is either \(\{z\}\) or \(\{w\}\). By symmetry, we may assume \(B_{\ell} = \{z\}\). Consider the set system  
\[
\mathcal{G}(z) := \{F \setminus \{z\} : F \in \mathcal{F}(z)\}.
\]  
By our assumption, \(\mathcal{G}(z)\) can be covered by \(\{x, w\}\). Since \(F_{j} = \{x, y, w\}\) satisfies \(B_{j} = \{x\}\), there is no other set \(F \in \mathcal{F}\) such that  
\[
F \cap F_{j} = \{x\}.
\]  
Therefore, the only set in \(\mathcal{F}(z)\) containing \(x\) is \(F_{i} = \{x, y, z\}\). Moreover, since \(|\mathcal{F}(z)| \ge n - 1\) by~\cref{claim:LargeDegree}, we conclude that \(\mathcal{F}(z)\) has the following structure:  
\[
\mathcal{F}(z) = \{x, y, z\} \cup \bigg( \bigcup_{v \in [n] \setminus \{x, w\}} \{x, w, v\} \bigg).
\]
This further yields
\[
\mathcal{F}(x)\cup\mathcal{F}(z)=\big\{ \{x, y, z\}, \{x, y, w\}, \{x, z, w\},\{y,z,w\} \big\} \cup \bigg( \bigcup_{v \in [n] \setminus \{x, y, z, w\}} \big(\{x, y, v\}\cup\{z,w,v\}\big) \bigg).
\]

It remains to consider the case where \( B_{\ell} = \emptyset \), which yields that \( F_{\ell} = \{x, z, w\} \) is a transversal set of \( \mathcal{F} \). Moreover, recall that by~\cref{claim:SizeofB1}
\[
\mathcal{F}(x) = \Big\{ \{x, y, z\}, \{x, y, w\}, \{x, z, w\} \Big\} \cup \Bigg( \bigcup_{v \in [n] \setminus \{x, y, z, w\}} \{x, y, v\} \Bigg).
\]

We next show that there is no element \( v \in [n]\setminus \{x, y, z, w\} \) and index \( p \in [m] \) such that \( B_p = \{v\} \). Suppose \( B_p = \{v\} \) for some \( p \in [m] \) and \( v \in [n]\setminus \{x, y, z, w\} \). By~\cref{claim:LowDegreeInB}, there exist distinct elements \( a, b, c \in [n] \setminus \{v\} \) such that
\[
\{a, b, v\}, \quad \{a, c, v\}, \quad \{b, c, v\} \in \mathcal{F}(v).
\]
Observe that \( x \notin \{a, b, c\} \), since \( \{x, y, v\} \) is the only set in \( \mathcal{F} \) containing both \( x \) and \( v \). On the other hand, \( \{z, w\} \subseteq \{a, b, c\} \); without loss of generality, assume \( b = z \) and \( c = w \). However, one can see that
\begin{enumerate}
    \item $\{z,w,v\}\cap\{z,v,a\}=\{z,v\}$, $\{z,w,v\}\cap\{w,v,a\}=\{w,v\}$ and $\{z,w,v\}\cap \{z,w,a\}=\{z,w\}$;
    \item $\{z,w,v\}\cap \{x,y,z\}=\{z\}$, $\{z,w,v\}\cap\{x,y,w\}=\{w\}$ and $\{z,w,v\}\cap \{x,y,v\}=\{v\}$;
    \item $\{z,w,v\}\cap\{x,y,v'\}=\emptyset$ for some $v'\in [n]\setminus\{x,y,z,w,v\}$, note that such $v'$ exists since $n\ge 6$.
\end{enumerate}
This implies that $\{z,w,v\}$ is a shattered set of $\mathcal{F}$, a contradiction. Therefore, there is no element $v\in [n]\setminus \{x,y,z,w\}$ and index $p\in [m]$ such that $B_{p}=\{v\}$.
Recall
\[
\mathcal{B} := \{ B_i : i \in [m],\ |B_i| = 2 \}.
\]  
We claim that for each \( v \in [n] \setminus \{x, y, z, w\} \), at least one of \( \{z, v\} \) and \( \{w, v\} \) is not in \( \mathcal{B} \). Suppose, for contradiction, that both \( \{z, v\} \) and \( \{w, v\} \) are in \( \mathcal{B} \). Then there is at most one set in \( \mathcal{F} \) containing \( \{z, v\} \), and at most one set containing \( \{w, v\} \). Moreover, by~\cref{claim:LowDegreeInB}, we know that \( \{x, y, v\} \) is the only set in \( \mathcal{F}(x) \) containing \( \{x, v\} \). Since \( \{x, z, w\} \) is a transversal of \( \mathcal{F} \), it follows that \( |\mathcal{F}(v)| \le 3 \), contradicting~\cref{claim:LargeDegree}.

Recall that  
\[
\Big\{ \{x, y, z\}, \{x, y, w\}, \{x, z, w\}, \{y, z, w\} \Big\} \subseteq \mathcal{F},
\]  
which implies that none of the \( \binom{4}{2} = 6 \) pairs in \( \{x, y, z, w\} \) belong to \( \mathcal{B} \). Moreover, for each \( v \in [n] \setminus \{x, y, z, w\} \), since \( \{x, y, v\} \in \mathcal{F} \), at most one of \( \{x, v\} \) and \( \{y, v\} \) can belong to \( \mathcal{B} \). Combining these observations, we obtain  
\[
|\mathcal{B}| \le \binom{n}{2} - \Big( 6 + (n - 4) + (n - 4) \Big) = \binom{n}{2} - (2n - 2).
\]
In addition, suppose $v$ satisfies that $B_{p}=\{v\}$ for some $p\in [m]$, then $v\in\{x,y,z,w\}$, and there are at most two such indices by~\cref{claim:SizeofB1}. First we can check $v\neq y$ by the following argument based on the fact that $\{x,z,w\}$ is a transversal of $\mathcal{F}$.
\begin{enumerate}
    \item If $F_{p}$ does not contain $x$, then $F_{p}$ is of the form either $\{y,z,v_{1}\}$ or $\{y,w,v_{1}\}$ for some $v_{1}\in [n]\setminus\{x,y\}$, which further implies that $\{x,y,v'\}\cap F_{p}=\{y\}$ for some $v'\in [n]\setminus\{x,y,z,w,v_{1}\}$. Note that such $v'$ exists since $n\ge 6$.
    \item If $F_{p}\in \mathcal{F}(x)$, since for $F_{i}=\{x,y,z\}$ and $F_{j}=\{x,y,w\}$, we already have $B_{i}=B_{j}=\{x\}$. Thus if $F_{p}=\{x,y,v''\}$ for some $v''\in [n]\setminus\{v,y,z,w\}$, then $\{y,z,w\}\cap F_{p}=\{y\}$.
\end{enumerate}
Second, if \( v = z \), then  
\[
|F_{p} \cap S| \ge 2 \quad \text{for any} \quad S \in \Big\{ \{x, y, z\}, \{x, z, w\}, \{y, z, w\} \Big\} \subseteq \mathcal{F}.
\]  
Since the witness \( B_{i} \) of \( F_{i} = \{x, y, z\} \) is \( \{x\} \), it follows that \( F_{p} \) must be \( \{y, z, w\} \) if \( B_{p} = \{z\} \). Similarly, if \( v = w \), then  
\[
|F_{p} \cap T| \ge 2 \quad \text{for any} \quad T \in \Big\{ \{x, y, w\}, \{x, z, w\}, \{y, z, w\} \Big\} \subseteq \mathcal{F}.
\]  
Since the witness \( B_{j} \) of \( F_{j} = \{x, y, w\} \) is \( \{x\} \), we conclude that \( F_{p} \) must also be \( \{y, z, w\} \) if \( B_{p} = \{w\} \). Therefore,  
\[
\Big\{ F_{p} \in \mathcal{F} : |B_{p}| = 1 \Big\} \subseteq \Big\{ \{x, y, z\}, \{x, y, w\}, \{y, z, w\} \Big\}.
\]

We now claim that  
\[
\left| \left\{ F_q \in \mathcal{F} : B_q = \emptyset \right\} \cup \left\{ F_p \in \mathcal{F} : |B_p| = 1 \right\} \right| \le 8.
\]  
To verify this, recall that \(F_{\ell} = \{x, z, w\}\) corresponds to \(B_{\ell} = \emptyset\), which implies that \(\{x, z, w\}\) covers all sets in \(\mathcal{F}\). We now examine the structure of \(\left\{ F_q \in \mathcal{F} : B_q = \emptyset \right\}\).

\begin{enumerate}
    \item Suppose that \(x \in F_q\). If \(y \notin F_q\), then \(F_q\) must contain at least one of \(z\) or \(w\), since \(\{y, z, w\} \in \mathcal{F}\). By symmetry, we may assume that \(F_q\) contains \(z\). Since \(B_{\ell} = \emptyset\), the maximality of witness \(B_i\) guarantees the existence of some \(F \in \mathcal{F}\) such that \(F \cap \{x, z, w\} = \{w\}\). It follows that \(F \cap F_q = \{w\}\), which implies \(F_q = \{x, z, w\}\).

On the other hand, if \(y \in F_q\), we may write \(F_q = \{x, y, v\}\). By maximality of witness, there exists some \(F \in \mathcal{F}(x)\) such that \(\{x, v\} \subseteq F\) but \(y \notin F\). By the structure of \(\mathcal{F}(x)\) established in~\cref{claim:SizeofB1}, the element \(v\) can only be \(z\) or \(w\). However, for \(F_i = \{x, y, z\}\) and \(F_j = \{x, y, w\}\), we have \(B_i = B_j = \{x\}\), which contradicts the assumption of $B_{q}=\emptyset$. 

Therefore, we conclude that the only possible choice for \(F_q\) containing \(x\) is \(F_q = \{x, z, w\}\).

\item 
If \(x \notin F_q\), then at least one of \(z\) or \(w\) must belong to \(F_q\). By symmetry, assume \(z \in F_q\). Then \(y \in F_q\); otherwise, \(\{x, y, v\} \in \mathcal{F}\) for any \(v \in [n] \setminus \{x, y, z\}\), contradicting the assumption that \(F_q\) covers all sets in \(\mathcal{F}\).

Now suppose there exist three distinct sets  
\[
F_{q_1} = \{y, z, v_1\}, \quad F_{q_2} = \{y, z, v_2\}, \quad F_{q_3} = \{y, z, v_3\}
\]  
with \(v_1, v_2, v_3 \in [n] \setminus \{x, w\}\), and \(B_{q_1} = B_{q_2} = B_{q_3} = \emptyset\). By the maximality of \(B_{q_1}\), there exists \(F \in \mathcal{F}\) such that  
\[
F \cap F_{q_1} = \{v_1\}.
\]  
Since \(F\) does not contain \(y\) or \(z\), we must have  
\[
F = \{v_1, v_2, v_3\}.
\]  
However, this implies \(F \cap \{x, z, w\} = \emptyset\), contradicting the fact that \(\{x, z, w\}\) intersects every set in \(\mathcal{F}\). Thus, the number of possible \(F_q\) of the form \(\{y, z, v\}\), with \(v \in [n] \setminus \{x, w\}\), is at most \(2\). By an analogous argument, the number of possible \(F_q\) of the form \(\{y, w, v\}\), with \(v \in [n] \setminus \{x, z\}\), is also at most \(2\). 
\end{enumerate}
In addition, \(F_q\) might be \(\{y, z, w\}\). Since we have shown that  
\[
\left\{ F_p \in \mathcal{F} : |B_p| = 1 \right\} \subseteq \left\{ \{x, y, z\}, \{x, y, w\}, \{y, z, w\} \right\},
\]  
and from the analysis above regarding \(\left\{ F_q \in \mathcal{F} : B_q = \emptyset \right\}\), we conclude that  
\[
\left| \left\{ F_q \in \mathcal{F} : B_q = \emptyset \right\} \cup \left\{ F_p \in \mathcal{F} : |B_p| = 1 \right\} \right| \le 8.
\]
Since \(n \ge 8\), we have  
\[
|\mathcal{F}| \le |\mathcal{B}| + 8 \le \binom{n}{2} - (2n - 2) + 8 = \binom{n-1}{2} +1 - n + 8 \le \binom{n - 1}{2} + 1,
\]  
which contradicts the assumption that \(|\mathcal{F}| \ge \binom{n - 1}{2} + 2.\) This completes the proof.

\end{poc}
Our next observation is also crucial.
\begin{claim}\label{claim:SmallIntersection}
    If \(F_{h} \in \mathcal{F}\) satisfies \(|F_{h} \cap \{x, y, z, w\}| \le 1,\) then \(B_{h}\in\binom{[n]\setminus\{x,y,z,w\}}{2}.\)
\end{claim}
\begin{poc}
First, \(B_{h} \neq \emptyset\), since all four \(3\)-element subsets of \(\{x, y, z, w\}\) belong to \(\mathcal{F}\), and at least one of them does not intersect \(F_{h}\). Similarly, we can also show that \(B_{h}\) cannot be \(\{v\}\) for any \(v \in \{x, y, z, w\}\). To see this, suppose \(B_{h} = \{v\}\) for some \(v \in [n] \setminus \{x, y, z, w\}\). Since  
\[
F_{h} \cap \{x, y, v\} \neq \{v\},
\]  
\(F_{h}\) must contain at least one of \(x\) or \(y\). Similarly, \(F_{h} \cap \{z, w, v\} \neq \{v\},\) implies that \(F_{h}\) must contain at least one of \(z\) or \(w\). This contradicts the assumption that  
\[
|F_{h} \cap \{x, y, z, w\}| \le 1.
\]
Therefore, for any \(F_{h} \in \mathcal{F}\) with \(|F_{h} \cap \{x, y, z, w\}| \le 1,\) we have \(|B_{h}| = 2\). Furthermore, by the structure of \(\mathcal{F}(x) \cup \mathcal{F}(z)\), it follows that  
\[
F_{h} \notin \mathcal{F}(x) \cup \mathcal{F}(z),
\]  
and hence \(B_{h} \cap \{x, z\} = \emptyset.\) Moreover, if \(B_{h} \cap \{y, w\} \neq \emptyset,\) then there exists some set \(F \in \mathcal{F}(x) \cup \mathcal{F}(z)\) such that  
\[
F \cap F_{h} = B_{h},
\]  
which contradicts the choice of \(B_{h}\). Therefore, we conclude that if \(|F_{h} \cap \{x, y, z, w\}| \le 1\), then 
\[
B_{h} \in \binom{[n] \setminus \{x, y, z, w\}}{2}.
\]  
\end{poc}
We then finish the proof of~\cref{lemma: NoTwoB1} based on the above series of claims. Since for each \(B_{h}\) with \(|B_{h}| = 2\), there is at most one corresponding \(F_{h}\), then by~\cref{claim:SmallIntersection}, we have  
\[
\left| \left\{ F_{h} \in \mathcal{F} : |F_{h} \cap \{x, y, z, w\}| \le 1 \right\} \right| \le \binom{n - 4}{2}.
\]

On the other hand, if \(|F_{h} \cap \{x, y, z, w\}| \ge 2\), then either \(F_{h} \in \mathcal{F}(x) \cup \mathcal{F}(z)\), or \(F_{h}\) is of the form \(\{y, w, v\}\) for some \(v \in [n] \setminus \{x, y, z, w\}\). Therefore,  
\[
|\mathcal{F}| \le \binom{n - 4}{2} + |\mathcal{F}(x) \cup \mathcal{F}(z)| + (n - 4).
\]  
By~\cref{claim:StructureFxFz}, \(|\mathcal{F}(x) \cup \mathcal{F}(z)| = 4 + 2(n - 4)\), it then follows that  
\[
|\mathcal{F}| \le \binom{n - 4}{2} + 4 + 2(n - 4) + (n - 4) = \binom{n - 1}{2} + 1,
\]  
contradicting our assumption. This completes the proof of~\cref{lemma: NoTwoB1}.
\end{proof}

Recall that 
\[
\mathcal{B} := \{ B_i : i \in [m],\ |B_i| = 2 \},
\] 
\[L=\big\{x\in [n]:\textup{there\ exists\ some\ }i\in [m]\ \textup{with\ }B_{i}=\{x\}\big\},\]
and
\[\mathcal{C}:=\{F_{i}\in\mathcal{F}:B_{i}=\emptyset\}.\]

By~\cref{lemma: NoTwoB1}, we then can assume that for any $x\in L$, \(\left| \{i \in [m] : B_{i} = \{x\} \} \right| \le 1.\)
Let us summarize the most important consequence of~\cref{lemma: NoTwoB1} here.

\begin{cor}\label{cor:MainInequality}
   Let $\mathcal{F}=\{F_{1},F_{2},\ldots,F_{m}\}\subseteq\binom{[n]}{3}$ be a set system with VC-dimension at most $2$. If $|\mathcal{F}|\ge \binom{n-1}{2}+2$, then we have 
   \[|\mathcal{F}|=|\mathcal{B}|+|L|+|\mathcal{C}|.\]
\end{cor}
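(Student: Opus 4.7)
The plan is to prove this by partitioning $\mathcal{F}$ according to the cardinality of the witness $B_i$, and then identifying each of the three parts with the corresponding set in the statement.

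Since each witness satisfies $B_i \subsetneq F_i$ and $|F_i|=3$, we have $|B_i|\in\{0,1,2\}$. Thus we can write
\[
\mathcal{F} = \mathcal{F}_0 \sqcup \mathcal{F}_1 \sqcup \mathcal{F}_2, \quad \text{where } \mathcal{F}_s := \{F_i\in\mathcal{F} : |B_i|=s\}.
\]
So it suffices to show that $|\mathcal{F}_0| = |\mathcal{C}|$, $|\mathcal{F}_1| = |L|$, and $|\mathcal{F}_2| = |\mathcal{B}|$. The first equality is immediate from the definition of $\mathcal{C}$.

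For $|\mathcal{F}_2| = |\mathcal{B}|$, I will use the map $F_i \mapsto B_i$ from $\mathcal{F}_2$ to $\mathcal{B}$. This map is surjective by the definition of $\mathcal{B}$, and it is injective by \cref{lemma:2222}(2), which says that each $2$-element subset of $[n]$ is the witness of at most one $F_i$. Hence the map is a bijection.

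For $|\mathcal{F}_1| = |L|$, I will use the map $F_i \mapsto x$ where $B_i = \{x\}$, from $\mathcal{F}_1$ to $L$. This map is surjective by the definition of $L$. This is precisely where the main assumption $m\ge\binom{n-1}{2}+2$ is used: by \cref{lemma: NoTwoB1}, for any $x\in[n]$ there is at most one $i\in[m]$ with $B_i=\{x\}$, so the map is injective as well. Combining the three equalities gives
\[
|\mathcal{F}| = |\mathcal{F}_0| + |\mathcal{F}_1| + |\mathcal{F}_2| = |\mathcal{C}| + |L| + |\mathcal{B}|,
\]
as desired. No genuine obstacle arises here, since all the work was done in \cref{lemma:2222} and \cref{lemma: NoTwoB1}; this corollary is simply a bookkeeping consequence that packages those two statements into a clean identity for later use.
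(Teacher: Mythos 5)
Your proof is correct and is exactly the bookkeeping argument the paper intends: the paper states this corollary without proof as an immediate consequence of \cref{lemma:2222}(2) and \cref{lemma: NoTwoB1}, and your partition of $\mathcal{F}$ by witness size together with the two injectivity facts is precisely that reasoning made explicit.
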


\subsection{When $\mathcal{C}=\emptyset$}\label{subsection:CisEmpty}
When $\mathcal{C}=\emptyset$, we mainly take advantage of~\cref{cor:MainInequality} and focus on the auxiliary graph $\mathcal{B}$ and the set $L$. Our key claim is the following one.
\begin{claim}\label{claim:LxiaoyuN-2}
    If $\mathcal{C}=\emptyset$, then $|L|\le n-2$.
\end{claim}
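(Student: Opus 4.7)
The plan is to proceed by contradiction, assuming $|L| \ge n - 1$, and to derive $|\mathcal{F}| < \binom{n-1}{2} + 2$, contradicting the standing hypothesis.

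The first step is to establish a key inequality for each $x \in L$:
\[
|\mathcal{F}(x)| + \deg_{\mathcal{B}}(x) \le 2n - 5.
\]
By~\cref{lemma: NoTwoB1} (applicable since $\mathcal{C} = \emptyset$), there is a unique $F_x = \{x, a_x, b_x\}$ with $B_{F_x} = \{x\}$, so $\{a_x, b_x\}$ is a transversal of the link $\mathcal{G}(x)$. Classifying vertices $v \in [n] \setminus \{x, a_x, b_x\}$ by their $\mathcal{G}(x)$-degree into sets of sizes $p_x$ (degree $2$), $q_x$ (degree $1$), $r_x$ (degree $0$) gives $|\mathcal{F}(x)| = 1 + 2p_x + q_x$ and $p_x + q_x \le n - 3$. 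Since $|\mathcal{G}(x)| \ge n - 1$ (by~\cref{claim:LargeDegree}) forces $\deg_{\mathcal{G}(x)}(a_x), \deg_{\mathcal{G}(x)}(b_x) \ge 2$, any $v$ with $\{x, v\} \in \mathcal{B}$ (which requires $|\mathcal{F}(x, v)| = 1$, i.e., $\deg_{\mathcal{G}(x)}(v) = 1$) must lie among the $q_x$ degree-one vertices, yielding $\deg_{\mathcal{B}}(x) \le q_x$. Combining gives $|\mathcal{F}(x)| + \deg_{\mathcal{B}}(x) \le 1 + 2(p_x + q_x) \le 2n - 5$.

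Next, I would sum this inequality over $x \in L$ and apply $3|\mathcal{F}| = \sum_x |\mathcal{F}(x)|$, $2|\mathcal{B}| = \sum_x \deg_{\mathcal{B}}(x)$, together with~\cref{cor:MainInequality} (giving $|\mathcal{F}| = |\mathcal{B}| + |L|$ under $\mathcal{C} = \emptyset$), to obtain
\[
5|\mathcal{F}| \le (2n-3)|L| + \sum_{x \notin L}\bigl(|\mathcal{F}(x)| + \deg_{\mathcal{B}}(x)\bigr).
\]
If $|L| = n$, the right-hand sum is empty and $|\mathcal{F}| \le n(2n-3)/5 < \binom{n-1}{2} + 2$ for every $n \ge 7$ (reducing to the identity $n^2 - 9n + 30 > 0$, a quadratic with negative discriminant), giving the contradiction.

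For the remaining case $|L| = n - 1$, let $u$ be the unique vertex with $u \notin L$, and set $N_u = \{v \in L : u \in \{a_v, b_v\}\}$. I will prove two complementary facts: (i) for $v \in L \setminus N_u$, the transversal property of $\mathcal{G}(v)$ (not containing $u$) forces $\deg_{\mathcal{G}(u)}(v) \le 2$, hence $|\mathcal{F}(u)| \le |N_u|(n-4)/2 + (n-1)$; (ii) for $v \in N_u$, the maximality of the witness $\{v\}$ of $F_v$ yields $|\mathcal{F}(u, v)| \ge 2$, so $\{u, v\} \notin \mathcal{B}$ and $\deg_{\mathcal{B}}(u) \le (n-1) - |N_u|$. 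Feeding these into the main inequality produces $5|\mathcal{F}| \le (2n-1)(n-1) + |N_u|(n-6)/2$, which contradicts $|\mathcal{F}| \ge \binom{n-1}{2} + 2$ whenever $|N_u|$ is strictly less than $n - 1$. The extremal configuration $|N_u| = n - 1$ can only saturate the bound when $\mathcal{F}(u)$ is the complete star through $u$; here, a shattering argument applies: if $S \in \mathcal{F}(\bar u)$, the full star realizes every subset of $S$ of size at most $2$, so $S$ itself would shatter $S$, contradicting the VC-dimension bound; thus $\mathcal{F}(\bar u) = \emptyset$, forcing $|\mathcal{F}| = \binom{n-1}{2}$, contrary to the hypothesis. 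The main obstacle I foresee is the near-extremal regime where $|\mathcal{F}(u)| = \binom{n-1}{2} - k$ for small $k \ge 1$: then $|\mathcal{F}(\bar u)| \ge k + 2$ while every $F \in \mathcal{F}(\bar u)$ must be a triangle in the complement graph $\overline{\mathcal{G}(u)}$ (which has only $k$ edges), and a Kruskal--Katona-type bound on triangle counts should close the remaining gap.
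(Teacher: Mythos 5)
Your per-vertex inequality $|\mathcal{F}(x)|+\deg_{\mathcal{B}}(x)\le 2n-5$ for $x\in L$ is correct (it is exactly the paper's inequality \eqref{ineq-star} rewritten via $|N_{\overline{\mathcal{B}}}(x)|=n-1-\deg_{\mathcal{B}}(x)$), and your averaging argument ruling out $|L|=n$ is sound and matches the paper's. The gap is in the case $|L|=n-1$. Your bound $5|\mathcal{F}|\le(2n-1)(n-1)+|N_u|(n-6)/2$ contradicts $|\mathcal{F}|\ge\binom{n-1}{2}+2$ exactly when $|N_u|(n-6)<n^2-9n+28$, i.e. $|N_u|<n-3+\tfrac{10}{n-6}$. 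Hence for $n\ge 16$ the contradiction only covers $|N_u|\le n-3$ (and for $n\ge 11$ only $|N_u|\le n-2$); your claim that it works \emph{whenever} $|N_u|<n-1$ is false for large $n$, and the case $|N_u|=n-2$ is left entirely unhandled.

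The extremal analysis for $|N_u|=n-1$ also does not close. There your inequality gives $|\mathcal{F}|\le\frac{(n-1)(5n-8)}{10}=\binom{n-1}{2}+1+\frac{n-6}{5}$, so the slack grows linearly in $n$: near-equality is not forced anywhere in the chain, and in particular $\mathcal{F}(u)$ need not be close to the complete star. Your fallback for the ``near-extremal regime'' rests on the assertion that every $F\in\mathcal{F}(\bar u)$ is a triangle of $\overline{\mathcal{G}(u)}$; this is not true --- to avoid being shattered, $F$ only needs \emph{some} proper subset (possibly a singleton or $\emptyset$, and possibly one that no set of $\mathcal{F}$, inside or outside $\mathcal{F}(u)$, realizes) to be missed, so at most one pair of $F$ need lie in $\overline{\mathcal{G}(u)}$ --- and the promised Kruskal--Katona-type count is not supplied. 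For comparison, the paper closes $|L|=n-1$ by a global argument: one may assume $|\mathcal{B}|\ge\binom{n-1}{2}-n+2$, which caps $\sum_{a\in L}|N_{\overline{\mathcal{B}}}(a)|\le 4n-6$, and a separate degree count shows the unique vertex outside $L$ has $|N_{\overline{\mathcal{B}}}|\le n-3$; plugging these into the same averaging yields the contradiction without any case split on $N_u$. You would need either that route or a genuinely new argument for $|N_u|\in\{n-2,n-1\}$.
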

\begin{poc}
Let \( \overline{\mathcal{B}} \) be the complement graph of \( \mathcal{B} \). By the definition of \( L \), for any \( x \in L \), there exists some set \( F_i = \{x, y, z\} \) such that \( B_i = \{x\} \). Thus, \( \{y, z\} \) serves as a transversal set of \( \mathcal{G}(x)=\{F\setminus\{x\}:F\in\mathcal{F}(x)\}. \) For convenience, we can regard $\mathcal{G}(x)$ as a graph on vertex set $[n]\setminus\{x\}$. By~\cref{claim:LargeDegree}, the vertices \( y \) and \( z \) share at least \( |\mathcal{F}(x)| - (n - 2)\ge 1 \) common neighbors in \( \mathcal{G}(x) \). Observe that for any such common neighbor $v \in [n] \setminus \{x, y, z\}$, we have $\{x, v\} \notin \mathcal{B}$. Additionally, it is clear that $\{x, y\}, \{x, z\} \notin \mathcal{B}$. Consequently, we obtain
\[
|N_{\overline{\mathcal{B}}}(x)| \geq 2 + |\mathcal{F}(x)| - (n - 2),
\]
which further implies that
\begin{align}\label{ineq-star}
|\mathcal{F}(x)| \leq n - 4 + |N_{\overline{\mathcal{B}}}(x)|.
\end{align}

If \( |\mathcal{B}| \leq \binom{n-1}{2} - n + 1 \), then by~\cref{cor:MainInequality}, we have  
\[
|\mathcal{F}| = |\mathcal{B}| + |L| \leq \binom{n-1}{2} + 1,
\]
which contradicts our assumption. Therefore, we proceed under the assumption that  
\[
|\mathcal{B}| \geq \binom{n-1}{2} - n + 2,
\]
which further implies  
\[
|\overline{\mathcal{B}}| \leq \binom{n}{2} - \binom{n-1}{2} + n - 2 = 2n - 3.
\]
Equivalently, we have  
\[
\sum_{a \in L} |N_{\overline{\mathcal{B}}}(a)| \leq 2|\overline{\mathcal{B}}|\leq 4n - 6.
\]
Suppose that $L=[n]$, then we have
\[\sum\limits_{a\in [n]}|\mathcal{F}(a)|\le n(n-4)+\sum\limits_{a\in L}|N_{\overline{\mathcal{B}}}(a)|\le n^{2}-6,\]
which further yields that when $n\ge 8$,
\[|\mathcal{F}|\le\frac{1}{3}\sum\limits_{a\in [n]}|\mathcal{F}(a)|\le\frac{n^{2}}{3}-2<\binom{n-1}{2}+1,\]
a contradiction. Therefore $|L|\le n-1$. We then claim that for any element \( b \in [n]\setminus L \), we have 
\begin{equation}\label{equ:Smalldegree}
    |N_{\overline{\mathcal{B}}}(b)| \leq n - 3.
\end{equation}
Suppose, for contradiction, that there exists some \( b \in [n]\setminus L  \) such that \( |N_{\overline{\mathcal{B}}}(b)| \geq n - 2 \). Let \( \mathcal{B}' \) denote the subgraph of \( \mathcal{B} \) induced by \( [n] \setminus \{b\} \). Observe that for each \( c \in [n] \setminus \{b\} \), we have \( |N_{{\mathcal{B}'}}(c)| \leq n - 2 \). By~\cref{claim:LowDegreeInB}, it follows that  
\[
|\mathcal{B}'| = \frac{1}{2} \sum_{c \in [n] \setminus \{b\}} |N_{{\mathcal{B}'}}(c)| \leq \frac{1}{2} \bigg( |L|\cdot (n - 4) + (n - 1 - |L|)\cdot(n - 2) \bigg) \leq \binom{n - 1}{2} - |L|.
\]

Thus, we have  
\[
|\mathcal{F}| = |\mathcal{B}| + |L| \leq |\mathcal{B}'| + 1 + |L| \leq \binom{n - 1}{2} + 1,
\]
which is a contradiction.  

Then suppose $|L|=n-1$, without loss of generality, we assume that $L=[n]\setminus\{1\}$, and by the above argument we have $|N_{\overline{\mathcal{B}}}(1)|\le n-3$ and hence  by \eqref{ineq-star},
\[|\mathcal{F}(1)|\le n-4+|N_{\overline{\mathcal{B}}}(1)|\le 2n-7.\]
Since $\sum\limits_{a\in L}|N_{\overline{\mathcal{B}}}(a)|\le 4n-6$ and $n\ge 8$, we then have
\[|\mathcal{F}|\le\frac{1}{3}\sum\limits_{a\in [n]}|\mathcal{F}(a)|\le \frac{1}{3}\bigg(2n-7+\sum\limits_{a\in L}(n-4+|N_{\overline{\mathcal{B}}}(a)|)\bigg)\le\frac{(n-1)(n-4)}{3}+2n-\frac{13}{3}\le\binom{n-1}{2}+1,\]
a contradiction to our assumption. Therefore $|L|\le n-2$, completing the proof.
\end{poc}
Based on~\cref{claim:LxiaoyuN-2}, we complete the proof by examining the maximum degree of the graph \( \mathcal{B} \), denoted by $\Delta(\mathcal{B})$. Obviously $\Delta(\mathcal{B})\le n-2$, since if $\{x,y\}\in\mathcal{B}$, then there exists some corresponding set $\{x,y,z\}\in\mathcal{F}$, which yields that $\{x,z\}\notin\mathcal{B}$. Moreover, when \( \Delta(\mathcal{B}) \leq n - 5 \), applying~\cref{cor:MainInequality}, we obtain
\[
|\mathcal{F}| = |\mathcal{B}| + |L| \leq \frac{n(n - 5)}{2} + n - 2 < \binom{n - 1}{2} + 1,
\]
which leads to a contradiction. Therefore, we proceed by dividing the final argument into three parts, each aimed at ruling out the case where \(\Delta(\mathcal{B}) = n - 2\), \(n - 3\), or \(n - 4\).

\subsubsection{$\Delta(\mathcal{B})=n-2$}
Let \( x \in [n] \) satisfy \( |N_{\mathcal{B}}(x)| = n - 2 \), and let \( y \notin N_{\mathcal{B}}(x) \). Then, for any \( v \in [n] \setminus \{x, y\} \), the set in \( \mathcal{F} \) corresponding to the witness \( \{x, v\} \in \mathcal{B} \) must be \( \{x, y, v\} \). This implies \( N_{\mathcal{B}}(y) = \emptyset \). 

We now claim that \( y \notin L \). Suppose, for contradiction, that \( y \in L \). Then, there exists some \( F_i = \{a, b, y\} \) such that \( B_i = \{y\} \). In this case, \( x \notin F_i \). Since \( n \geq 8 \), we can find \( w \in [n] \setminus \{x, y, a, b\} \) such that 
\[
\{x, y, w\} \cap \{a, b, y\} = \{y\}.
\]
However, this contradicts the assumption that \( B_i = \{y\} \). Thus, \( y \notin L \). Finally, by~\eqref{equ:Smalldegree}, we have \( |N_{\mathcal{B}}(y)| \geq 2 \), which is a contradiction to the fact \( N_{\mathcal{B}}(y) = \emptyset \). Therefore, it is impossible that $\Delta(\mathcal{B})=n-2$.

\subsubsection{$\Delta(\mathcal{B})=n-3$}
Let \( x \in [n] \) satisfy \( |N_{\mathcal{B}}(x)| = n - 3 \), and let \( y, z \notin N_{\mathcal{B}}(x) \). For each \( v \in [n] \setminus \{x, y, z\} \), the set in \( \mathcal{F} \) corresponding to \( \{x, v\} \in \mathcal{B} \) must contain either \( y \) or \( z \). Therefore, at least one of \( \{v, y\} \) and \( \{v, z\} \) does not belong to \( \mathcal{B} \), which further implies that
\[|N_{\mathcal{B}}(y)|+|N_{\mathcal{B}}(z)|\le 2(n-1)-(n-3)-2=n-1.\]
Moreover, we set $t:=\big|\big([n]\setminus\{y,z\}\big)\cap L\big|$. Note that $|L|-2\le t\le |L|$ by~\cref{claim:LowDegreeInB} and $\Delta(\mathcal{B})=n-3$, we then have
\[\sum\limits_{v\in [n]\setminus\{y,z\}}|N_{\mathcal{B}}(v)|=\sum\limits_{v\in [n]\setminus(\{y,z\}\cup L)}|N_{\mathcal{B}}(v)|+\sum\limits_{v\in ([n]\setminus\{y,z\})\cap L}|N_{\mathcal{B}}(v)|\le (n-2-t)\cdot (n-3)+t(n-4).\]
Therefore, we have
\[|\mathcal{B}|\le\frac{1}{2}\bigg(\big(n-2\big)\big(n-3\big)-t+n-1\bigg)\le\binom{n-2}{2}-\frac{|L|}{2}+\frac{n}{2}+\frac{1}{2}.\]
Furthermore, by~\cref{cor:MainInequality} and the fact that $|L|\le n-2$ from~\cref{claim:LxiaoyuN-2}, we have
\[|\mathcal{F}|=|\mathcal{B}|+|L|\le\binom{n-2}{2}+\frac{|L|}{2}+\frac{n}{2}+\frac{1}{2}\le\binom{n-1}{2}+\frac{3}{2},\]
which is a contradiction to the assumption that $|\mathcal{F}|\ge\binom{n-1}{2}+2$. Therefore, it is impossible that $\Delta(\mathcal{B})=n-3$.

\subsubsection{$\Delta(\mathcal{B})=n-4$}
Let \( x \in [n] \) satisfy \( |N_{\mathcal{B}}(x)| = n - 4 \), and let \( y, z, w \notin N_{\mathcal{B}}(x) \). For each \( v \in [n] \setminus \{x, y, z, w\} \), the set in \( \mathcal{F} \) corresponding to \( \{x, v\} \in \mathcal{B} \) must contain exactly one of \( y \), \( z \), or \( w \). Therefore, at least one of \( \{v, y\} \), \( \{v, z\} \), and \( \{v, w\} \) does not belong to \( \mathcal{B} \).  Moreover, $\{x,y\},\{x,z\},\{x,w\}\notin \mathcal{B}$. This observation implies that  
\begin{equation}\label{eq:Byzw}
   |N_{\mathcal{B}}(y)| + |N_{\mathcal{B}}(z)| + |N_{\mathcal{B}}(w)| \leq 3(n - 1) - (n - 4) - 3 = 2n - 2. 
\end{equation}
Since \( \Delta(\mathcal{B}) = n - 4 \), we have  
\begin{equation}\label{eq:BBB}
    |\mathcal{B}| \leq \frac{1}{2} \big( (n - 4)(n - 3) + 2n - 2 \big) = \binom{n - 3}{2} + n - 1.
\end{equation}
Applying~\cref{cor:MainInequality}, it follows that  
\begin{equation}\label{eq:+2}
    |\mathcal{F}| = |\mathcal{B}| + |L| \leq \binom{n - 3}{2} + n - 1 + n - 2 = \binom{n - 1}{2} + 2.
\end{equation}
Now, suppose the equality in~\eqref{eq:+2} holds, that is, \( |\mathcal{F}| = \binom{n - 1}{2} + 2 \). Then all of the equalities in~\cref{claim:LxiaoyuN-2},~\eqref{eq:Byzw} and~\eqref{eq:BBB} should hold. Therefore, we have \( |L| = n - 2 \), and for each \( v \in [n] \setminus \{y, z, w\} \), we have \( |N_{\mathcal{B}}(v)| = n - 4 \). Moreover, \( \{y, z\}, \{y, w\}, \{z, w\} \in \mathcal{B} \) since \(|N_{\mathcal{B}}(y)| + |N_{\mathcal{B}}(z)| + |N_{\mathcal{B}}(w)| = 2n - 2.\)  

We then claim that \( x \notin L \). By symmetry, showing this for \( x \) will imply the same for all \( v \in [n] \) with \( |N_{\mathcal{B}}(v)| = n - 4 \), leading to a contradiction since \( |L| = n - 2 \).  

Consider \( F_i = \{x, a, b\} \) with witness \( B_i = \{x\} \). Since $\{x,a\},\{x,b\}\notin \mathcal{B}$, we infer that $\{a,b\}\subseteq\{y,z,w\}$. However, it implies that $\{a,b\}\in \mathcal{B}$, contradicting our choice of $B_i=\{x\}$. Thus \( x \notin L \) and we cannot have equality in \eqref{eq:+2}.



Combining the above analysis with the inductive hypothesis, we conclude that when \(\mathcal{C} = \emptyset\), it holds that \(|\mathcal{F}| \le \binom{n - 1}{2} + 1\).

\subsection{Interplay among $\mathcal{B}$, $L$ and $\mathcal{C}$, and the forbidden structures in $\mathcal{C}$}\label{subsection:StructralofC}
Since we have already provided a complete proof for the case \( \mathcal{C} = \emptyset \) in~\cref{subsection:CisEmpty}, we will now proceed under the assumption that \( \mathcal{C} \neq \emptyset \).

First, for any \( F = \{x, y, z\} \in \mathcal{C} \), it is clear that \( \{x, y\}, \{y, z\}, \{x, z\} \notin \mathcal{B} \). To further analyze the intricate relationship among \( \mathcal{B} \), \( L \), and \( \mathcal{C} \), we will establish a series of claims. 

\begin{claim}\label{claim:MissTwoedges}
Let $F=\{x,y,z\}\in\mathcal{C}$, then for any element \( v \in [n] \setminus \{x, y, z\} \), at least two of the pairs in \(\big\{ \{v, x\}, \{v, y\}, \{v, z\} \big\}\) do not belong to \( \mathcal{B} \).

\end{claim}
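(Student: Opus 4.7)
My plan is to argue by contradiction: suppose that two of the three pairs, say $\{v, x\}$ and $\{v, y\}$, both lie in $\mathcal{B}$ (the other cases are symmetric). The goal is to contradict the density bound $|\mathcal{F}(v)| \geq n - 1$ from \cref{claim:LargeDegree}, by showing that $\mathcal{F}(v)$ must in fact be quite small.

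The first step is to extract structural consequences of $\{v, x\}, \{v, y\} \in \mathcal{B}$. Each such pair corresponds to a set $F_j \supseteq \{v, x\}$ with witness $B_j = \{v, x\}$ and a set $F_k \supseteq \{v, y\}$ with witness $B_k = \{v, y\}$. A key observation I will establish is uniqueness: $F_j$ is the only set in $\mathcal{F}$ containing $\{v, x\}$. Indeed, if some $F' \neq F_j$ in $\mathcal{F}$ also contained $\{v, x\}$, then $F' \cap F_j$ would have cardinality at least $2$ and, being a proper subset of $F_j$, would equal $\{v, x\} = B_j$, contradicting the definition of $B_j$. Symmetrically, $F_k$ is the unique set of $\mathcal{F}$ containing $\{v, y\}$, and clearly $F_j \neq F_k$ since their witnesses differ.

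Next, I will exploit the hypothesis that $F = \{x, y, z\} \in \mathcal{C}$, i.e., that the witness of $F$ is $\emptyset$. This precisely means that $F$ is a transversal of $\mathcal{F}$: every set of $\mathcal{F}$ intersects $\{x, y, z\}$. Combining this with uniqueness, for any $F' \in \mathcal{F}(v) \setminus \{F_j, F_k\}$, uniqueness forces $x, y \notin F'$, while the transversal property forces $F' \cap \{x, y, z\} \neq \emptyset$, hence $z \in F'$. Therefore every such $F'$ has the form $\{v, z, p\}$ with $p \in [n] \setminus \{v, x, y, z\}$, giving at most $n - 4$ possible sets.

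Putting the two bounds together yields $n - 1 \leq |\mathcal{F}(v)| \leq 2 + (n - 4) = n - 2$, the desired contradiction. I do not foresee any real obstacle here: the proof is a short counting argument that requires no case analysis on the third elements of $F_j$ and $F_k$, and it combines directly three ingredients already at hand — the uniqueness forced by the witness in $\mathcal{B}$, the transversal property from $B_i = \emptyset$, and the degree lower bound from \cref{claim:LargeDegree}.
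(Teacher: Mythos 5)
Your proof is correct and follows essentially the same route as the paper: use the witnesses $\{v,x\},\{v,y\}\in\mathcal{B}$ to force uniqueness of the sets through these pairs, use $B_F=\emptyset$ to force every other member of $\mathcal{F}(v)$ to contain $z$, and contradict the bound $|\mathcal{F}(v)|\ge n-1$ from \cref{claim:LargeDegree}. Your bookkeeping (counting $\mathcal{F}(v)\setminus\{F_j,F_k\}$ directly, so that at most $n-4+2=n-2$ sets arise regardless of the third elements of $F_j$ and $F_k$) cleanly subsumes the paper's three-way case split on whether $z\in\{a,b\}$.
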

\begin{poc}
Suppose that there are two pairs in \(\big\{ \{v, x\}, \{v, y\}, \{v, z\} \big\}\) belonging to $\mathcal{B}$.
By symmetry, we assume that that \( F_{i} = \{x, v, a\} \) with witness \( B_{i} = \{x, v\} \), and \( F_{j} = \{y, v, b\} \) with witness \( B_{j} = \{y, v\} \). Then all sets in \( \mathcal{F}(v) \setminus \{F_{i}, F_{j}\} \) must contain \( z \). If \( z \notin \{a, b\} \), then  
\[
|\mathcal{F}(v)| \le 2 + (n - 4) = n - 2,
\]  
contradicting~\cref{claim:LargeDegree}. If exactly one of \( a \) or \( b \) equals \( z \), then  
\[
|\mathcal{F}(v)| \le 1 + (n - 3) = n - 2,
\]  
again a contradiction. If \( a = b = z \), then  
\[
|\mathcal{F}(v)| \le n - 2,
\]  
which is still a contradiction. This finishes the proof.
\end{poc}

We next show that under the assumption that \(|\mathcal{F}| \ge \binom{n-1}{2} + 2\), for any set \(F \in \mathcal{C}\), it is impossible that all elements of \(F\) belong to \(L\). For convenience, given subsets \(A \subseteq B\) and set system $\mathcal{F}$, we define  
\[
\mathcal{F}(A,B) = \{F \setminus B : F \in \mathcal{F}, \ F \cap B = A\}.
\]

\begin{claim}\label{claim:XYZLE2}
For any $F\in\mathcal{C}$, we have
 \(|L\cap F|\le 2 \).
\end{claim}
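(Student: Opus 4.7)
The plan is to assume for contradiction that $F = \{x, y, z\} \in \mathcal{C}$ has all three elements in $L$ and derive $|\mathcal{F}| < \binom{n-1}{2} + 2$. For each $v \in \{x, y, z\}$, \cref{lemma: NoTwoB1} gives a unique $F_v \in \mathcal{F}$ with $B_{F_v} = \{v\}$; the witness condition $F \cap F_v \ne \{v\}$ together with $v \in F \cap F_v$ and $F \ne F_v$ forces $|F \cap F_v| = 2$, so we write $F_v = \{v, w_v, u_v\}$ with $w_v \in F \setminus \{v\}$ and $u_v \in [n] \setminus F$. We also use the link-transversal property: every edge of $\mathcal{G}(v)$ contains $w_v$ or $u_v$. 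The assignment $v \mapsto w_v$ is a fixed-point-free self-map of $\{x, y, z\}$, so up to permuting $\{x, y, z\}$ it is either a $3$-cycle or has in-degree sequence $(2, 1, 0)$, yielding two cases.

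In the non-cyclic case we may assume $w_x = w_z = y$ with $w_y \in \{x, z\}$. In either subcase, computing one of $F_y \cap F_z$ (when $w_y = x$) or $F_x \cap F_y$ (when $w_y = z$) and invoking the appropriate singleton witness condition forces two of the $u$-values to coincide in a common value $u$. Next, by the maximality of $B_F = \emptyset$, there exists $G \in \mathcal{F}$ with $G \cap F = \{x, z\}$, so $G = \{x, z, \ast\}$ with $\ast \in [n] \setminus F$. Intersecting $G$ with $F_z$ and invoking $B_{F_z} = \{z\}$ pins down $\ast$ to the appropriate forced $u$-value; a direct check then yields $G \cap F_x = \{x\}$, violating $B_{F_x} = \{x\}$. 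This rules out the non-cyclic case.

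In the cyclic case, say $w_x = y$, $w_y = z$, $w_z = x$, the three cyclic intersection computations $F_x \cap F_y$, $F_y \cap F_z$, $F_z \cap F_x$ combined with the singleton witness conditions $B_{F_x}=\{x\}$, $B_{F_y} = \{y\}$, $B_{F_z} = \{z\}$ force $u_x = u_y = u_z =: u$, so all four $3$-subsets of $\{x, y, z, u\}$ lie in $\mathcal{F}$. Now the transversal $\{z, u\}$ of $\mathcal{G}(y)$ implies that any $\{x, y, v\} \in \mathcal{F}$ satisfies $v \in \{z, u\}$, so $\mathcal{F}(x, y) = \{F, F_x\}$. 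Combined with the transversal $\{y, u\}$ of $\mathcal{G}(x)$ (so that $\mathcal{F}(x) = \mathcal{F}(x, y) \cup \mathcal{F}(x, u)$), we obtain $|\mathcal{F}(x)| \le 2 + (n-2) - 1 = n - 1$. Since $|\mathcal{F}(x)| \ge n - 1$ by \cref{claim:LargeDegree}, equality forces $|\mathcal{F}(x, u)| = n - 2$, and analogous arguments give $|\mathcal{F}(y)| = |\mathcal{F}(z)| = n - 1$ and $|\mathcal{F}(\alpha, \beta)| = 2$ for every pair $\{\alpha, \beta\} \subseteq \{x, y, z\}$.

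Finally, $F$ being a transversal of $\mathcal{F}$ gives $\mathcal{F} = \mathcal{F}(x) \cup \mathcal{F}(y) \cup \mathcal{F}(z)$; inclusion-exclusion then yields $|\mathcal{F}| = 3(n-1) - 3 \cdot 2 + 1 = 3n - 8$, which is strictly less than $\binom{n-1}{2} + 2$ for every $n \ge 7$, contradicting the standing hypothesis. The main obstacle will be the cyclic case: once the rigid tetrahedron structure on $\{x, y, z, u\}$ is established, the tight bounds on the link sizes and the careful combinatorial combination of the transversal conditions from $\mathcal{G}(x), \mathcal{G}(y), \mathcal{G}(z)$ collapse $|\mathcal{F}|$ to the linear quantity $3n - 8$.
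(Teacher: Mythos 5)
Your proof is correct, but it takes a genuinely different and considerably longer route than the paper's. The paper's argument is a short local count: writing $F_v=\{v,w_v,u_v\}$ exactly as you do, it notes that $F_x\cup F_y\cup F_z\subseteq\{x,y,z,u_x,u_y,u_z\}$ has at most six elements, picks (using $n\ge 8$) a vertex $w$ outside this union, and observes that the witness condition $B_{F_v}=\{v\}$ forces every member of $\mathcal{F}(w,v)$ to meet $F_v$ in exactly two elements, whence $|\mathcal{F}(w,v)|\le 2$ for each $v\in\{x,y,z\}$; since $F\in\mathcal{C}$ is a transversal of $\mathcal{F}$, this gives $|\mathcal{F}(w)|\le 6<n-1$, contradicting \cref{claim:LargeDegree}. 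You instead classify the fixed-point-free map $v\mapsto w_v$ into a cyclic and a non-cyclic case, kill the non-cyclic case by producing a set $G$ with $G\cap F_x=\{x\}$, and in the cyclic case extract the full tetrahedron on $\{x,y,z,u\}$ together with exact link sizes, arriving at the global bound $|\mathcal{F}|\le 3n-8$. Both routes ultimately contradict the density condition of \cref{claim:LargeDegree}, the paper's locally at one vertex and yours globally; the paper's is shorter and avoids the case analysis, while yours yields more structural information about the configuration. Two steps in your write-up are stated a bit loosely but are easily repaired: in the subcase $w_y=z$ the third element of $G$ is pinned to $u_z$, which is \emph{not} the coinciding value $u=u_x=u_y$, and the final step $G\cap F_x=\{x\}$ additionally requires $u_z\ne u_x$; this (and the analogous $u_x\ne u_y$ in the other subcase) follows because $F_x,F_y,F_z$ are pairwise distinct sets, having distinct assigned witnesses.
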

\begin{poc}
Suppose that $\{x,y,z\}\in \mathcal{C}$ and \(\{x, y, z\} \subseteq L \). There exist \( F_1, F_2, F_3 \in \mathcal{F} \) such that  
\[
{B_1 = \{x\}, \quad B_2 = \{y\}, \quad B_3 = \{z\}.}
\]

Since \( \{x, y, z\} \in \mathcal{F} \), we have \( |F_i \cap \{x, y, z\}| \geq 2 \) for each \( i \in [3] \). Therefore, we can write  
\[
F_1 = \{x, u_x, v_x\}, \quad F_2 = \{y, u_y, v_y\}, \quad F_3 = \{z, u_z, v_z\},
\]
where \( u_x, u_y, u_z \in \{x, y, z\} \). 


Note that $F_1\cup F_2\cup F_3=\{x,y,z,v_x,v_y,v_z\}$. Since $n\ge 8$, pick arbitrary $w\in [n]\setminus \{x,y,z,v_x,v_y,v_z\}$.
Since $B_1=\{x\}$, we infer $|F\cap F_1|= 2$ for each $F\in \cF(w,x)$. It follows that $|\cF(w,x)|\leq 2$. Similarly $|\cF(w,y)|\leq 2$ and $|\cF(w,z)|\leq 2$. 
Since $\{x,y,z\}$ is a transversal of $\cF$, we obtain that 
\[
|\cF(w)|=|\cF(w,x)|+|\cF(w,y)|+|\cF(w,z)| \leq 2+2+2=6<n-1,
\]
contradicting~\cref{claim:LargeDegree}. This finishes the proof.
\end{poc}

Then we can see, if $\mathcal{C}\neq\emptyset$, based on~\cref{claim:LowDegreeInB},~\cref{cor:MainInequality},~\cref{claim:MissTwoedges}, and~\cref{claim:XYZLE2}, we have
\[
|\mathcal{F}| \le \binom{n}{2} - 3 - 2(n - 3) - \frac{|L| - 2}{2} + |L| + |\mathcal{C}|,
\]
where the term \( 2(n - 3)  \) comes from~\cref{claim:MissTwoedges}. Moreover, the term $\frac{|L|-2}{2}$ comes from~\cref{claim:LowDegreeInB} and~\cref{claim:XYZLE2}, more precisely, for each element \( v \in L \setminus \{x, y, z\} \), there are at least three elements \( u \in [n] \setminus \{v\} \) such that \( \{u, v\} \notin \mathcal{B} \), and some two of these \( u \) belong to \( \{x, y, z\} \), therefore, there are at least $\frac{|L|-2}{2}$ extra pairs of elements $\{u,v\}$ not appearing in $\mathcal{B}$. Furthermore, by~\cref{claim:XYZLE2}, we can see if $\mathcal{C}\neq\emptyset$, $|L|\le n-1$, therefore we have
\begin{align*}
|\mathcal{F}| &\le \binom{n}{2} - 3 - 2(n - 3) - \frac{|L| - 2}{2} + |L| + |\mathcal{C}|\\[3pt]
&\le \binom{n-1}{2}+1 - n+2+ \frac{|L| }{2} + |\mathcal{C}|\\[3pt]
&\le \binom{n-1}{2}+1 - \frac{n}{2}+2 - \frac{1 }{2} + |\mathcal{C}|\\[3pt]
&\leq  \binom{n - 1}{2} + 1+|\mathcal{C}|-2-\frac{1}{2},
\end{align*}
where the last inequality holds since $\frac{n}{2}\ge 4$. Since $|\mathcal{F}|$ must be an integer, from now on, we can always assume that $|\mathcal{C}|\ge 4$, otherwise $|\mathcal{F}|<\binom{n-1}{2}+2$, a contradiction.

Next, we further explore the structural properties of \( \mathcal{C} \) under the assumption that \( |\mathcal{C}| \geq 4 \). Our primary focus here is to identify and eliminate certain forbidden configurations within \( \mathcal{C} \). First, we say a set system \( \mathcal{F} \) is \emph{$2$-intersecting} if for any two sets \( A, B \in \mathcal{F} \), we have \(|A \cap B| \geq 2.\) We also say two set systems \( \mathcal{F}, \mathcal{F}' \) are \emph{cross-intersecting} if $F\cap F'\neq \emptyset$ for any two sets $F\in \mathcal{F}$, $F'\in \mathcal{F}'$.

\begin{claim}\label{claim:Not2Intersection}
If $|\mathcal{C}|\ge 4$, then $\mathcal{C}$ is not 2-intersecting.
\end{claim}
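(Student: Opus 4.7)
The plan is to suppose for contradiction that $\mathcal{C}$ is 2-intersecting with $|\mathcal{C}|\ge 4$, and to begin by classifying the possible structure of $\mathcal{C}$. Any two distinct 3-sets in $\mathcal{C}$ meet in exactly a pair, so I fix $A=\{a,b,c\}$ and $B=\{a,b,d\}$ in $\mathcal{C}$ with $A\cap B=\{a,b\}$. A short analysis of the two constraints $|C\cap A|\ge 2$ and $|C\cap B|\ge 2$ on any further $C\in\mathcal{C}$ shows that either $C=\{a,b,e\}$ for some $e$, or $C\in\{\{a,c,d\},\{b,c,d\}\}$; moreover, a set of the latter type 2-intersects a star set $\{a,b,e\}$ only when $e\in\{c,d\}$. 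Combined with $|\mathcal{C}|\ge 4$, this yields exactly two possibilities: (i) $\mathcal{C}=\{\{a,b,c_i\}:i\in I\}$ is a star on the pair $\{a,b\}$ with $|I|\ge 4$ distinct $c_i$, or (ii) $\mathcal{C}=\binom{\{a,b,c,d\}}{3}$.

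In case (i), I would exploit the maximality of the empty witness $B_1=\emptyset$ of $F_1=\{a,b,c_1\}\in\mathcal{C}$: by maximality there must exist some $F\in\mathcal{F}$ with $F\cap F_1=\{c_1\}$, so $F$ contains $c_1$ but is disjoint from $\{a,b\}$. For every $j\in I$ the set $\{a,b,c_j\}\in\mathcal{C}$ is a transversal of $\mathcal{F}$ (its witness being $\emptyset$), so $F\cap\{a,b,c_j\}\ne\emptyset$; since $F$ misses both $a$ and $b$, this forces $c_j\in F$ for every $j\in I$. Hence $|F|\ge |I|\ge 4$, contradicting $|F|=3$.

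In case (ii), writing $S:=\{a,b,c,d\}$, every $F\in\mathcal{F}$ must satisfy $|F\cap S|\ge 2$, because otherwise $F$ would be disjoint from some member of $\mathcal{C}\subseteq\mathcal{F}$. I would then use the identity $|\mathcal{F}|=|\mathcal{B}|+|L|+|\mathcal{C}|$ from~\cref{cor:MainInequality} and bound each summand. No pair inside $S$ can lie in $\mathcal{B}$: any such pair is contained in two distinct members of $\mathcal{C}$, which contradicts the maximality condition defining $\mathcal{B}$. Applying~\cref{claim:MissTwoedges} simultaneously to all four members of $\mathcal{C}$ for a fixed $v\in[n]\setminus S$ forces at most one of the four pairs $\{v,a\},\{v,b\},\{v,c\},\{v,d\}$ to be in $\mathcal{B}$, so the cross-edges between $S$ and $[n]\setminus S$ number at most $n-4$. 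Combined with the trivial bound $\binom{n-4}{2}$ on edges inside $[n]\setminus S$, this gives $|\mathcal{B}|\le\binom{n-4}{2}+(n-4)$. Applying~\cref{claim:XYZLE2} to each 3-subset of $S$ forces $|L\cap S|\le 2$, hence $|L|\le n-2$. Summing with $|\mathcal{C}|=4$ yields $|\mathcal{F}|\le\binom{n-4}{2}+2n-2$, which a direct calculation shows is strictly less than $\binom{n-1}{2}+2$ for all $n\ge 6$, contradicting the standing assumption $|\mathcal{F}|\ge\binom{n-1}{2}+2$.

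The main obstacle will be case (ii): while the star case collapses via a single clean application of witness maximality, case (ii) demands assembling several earlier lemmas at once, and the key tight step is verifying that the four simultaneous applications of~\cref{claim:MissTwoedges} strengthen the per-vertex cross-edge bound from the "at most two" obtained from any single application to the "at most one" needed to close the final count.
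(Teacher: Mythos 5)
Your proposal is correct, and its skeleton matches the paper's: classify the $2$-intersecting possibilities for $\mathcal{C}$ into a star on a pair versus $\binom{S}{3}$ for a $4$-set $S$, kill the star case by the maximality of the empty witness (exactly the paper's argument showing a pair can lie in at most three members of $\mathcal{C}$), and then dispose of the $\binom{S}{3}$ case. The only divergence is in that last case: the paper simply picks $v\in[n]\setminus S$, observes that every $F\in\mathcal{F}(v)$ must meet $S$ in exactly two elements (each $3$-subset of $S$ is a transversal of $\mathcal{F}$), so $|\mathcal{F}(v)|\le\binom{4}{2}=6<n-1$, contradicting~\cref{claim:LargeDegree}. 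You instead run the full $|\mathcal{F}|=|\mathcal{B}|+|L|+|\mathcal{C}|$ count via~\cref{claim:MissTwoedges} and~\cref{claim:XYZLE2}; your strengthening of the per-vertex cross-edge bound from ``at most two'' to ``at most one'' is valid (any two of $\{v,a\},\dots,\{v,d\}$ in $\mathcal{B}$ would violate~\cref{claim:MissTwoedges} for the member of $\mathcal{C}$ containing both endpoints), and the arithmetic $\binom{n-4}{2}+2n-2=\binom{n-1}{2}-n+7<\binom{n-1}{2}+2$ checks out for $n\ge 8$. Both routes are sound; the paper's degree argument is shorter and needs none of the auxiliary machinery, but your version works.
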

\begin{poc}
    Suppose that \( \mathcal{C} \) is \( 2 \)-intersecting and let \( \{x, y, z\} \in \mathcal{C} \). Observe that 
 \( \mathcal{C}(\{x, y\}, \{x, y, z\}) \), \( \mathcal{C}(\{x, z\}, \{x, y, z\}) \), \( \mathcal{C}(\{y, z\}, \{x, y, z\}) \) are pairwise cross-intersecting. 
Suppose \(|\mathcal{C}(\{x, y\}, \{x, y, z\})|\geq 2 \), then since $\mathcal{C}$ is $2$-intersecting, we can see \( \mathcal{C}(\{x, z\}, \{x, y, z\}) =\mathcal{C}(\{y, z\}, \{x, y, z\})=\emptyset \). Moreover, we claim that \(|\mathcal{C}(\{x, y\}, \{x, y, z\})|\leq 2 \), otherwise, there exist sets $\{x,y,z_i\}\in \mathcal{C}$, for $i=1,2,3$. Then by the maximality of witness, for $\{x,y,z\}\in\mathcal{C}$, there is some $F\in\mathcal{F}$ such that $F\cap\{x,y,z\}=\{z\}$, however, $F$ must contain $z_{1},z_{2},z_{3}$, which is impossible. But it this case, $|\mathcal{C}|=3$, a contradiction.

Therefore, all of \( |\mathcal{C}(\{x, y\}, \{x, y, z\})| \), \( |\mathcal{C}(\{x, z\}, \{x, y, z\})| \), \(| \mathcal{C}(\{y, z\}, \{x, y, z\})|\) are at most $1$. Since $|\mathcal{C}|\ge 4$ and $\mathcal{C}$ is $2$-intersecting, \( \mathcal{C} \) has to be a complete \( 3 \)-uniform hypergraph on \( \{x, y, z, w\} \) for some $w\in [n]\setminus\{x,y,z\}$. 
Let $v\in [n]\setminus \{x,y,z,w\}$. Then for each $F\in \mathcal{F}(v)$, $|F\cap \{x,y,z,w\}|=2$. It follows that $|\mathcal{F}(v)|\leq \binom{4}{2}=6$, contradicting Claim \ref{claim:LargeDegree}. 
\end{poc}

We refer to a 3-uniform hypergraph on the vertex set \(\{x, y, z, a, b, c\}\) with edges \(\{a, x, b\}\), \(\{b, z, c\}\), and \(\{a, y, c\}\) as a \emph{linear triangle}, a configuration that has been extensively studied in extremal combinatorics.
\begin{claim}\label{claim:NoLinearTriangle}
    $\mathcal{C}$ contains no linear triangle.
\end{claim}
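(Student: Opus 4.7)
The plan is to argue by contradiction. Suppose $\mathcal{C}$ contains a linear triangle with edges $F_1 = \{a,x,b\}$, $F_2 = \{b,z,c\}$, $F_3 = \{a,y,c\}$ on the vertex set $\{a,b,c,x,y,z\}$. I would first observe that since $F_1, F_2, F_3 \in \mathcal{C}$, their chosen witnesses satisfy $B_1 = B_2 = B_3 = \emptyset$, which by the definition of witness means each $F_i$ is a transversal of $\mathcal{F}$: every member of $\mathcal{F}$ meets each $F_i$. A crucial structural property of the linear triangle is that its three pairwise edge-intersections are the three distinct corner vertices $\{a\},\{b\},\{c\}$, so in particular $F_1 \cap F_2 \cap F_3 = \emptyset$.

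Next I would pick a vertex $v \in [n] \setminus \{a,b,c,x,y,z\}$, which is possible since $n \ge 8 > 6$, and apply~\cref{claim:LargeDegree} to get $|\mathcal{F}(v)| \ge n - 1 \ge 7$. Each $F \in \mathcal{F}(v)$ has the form $\{v, u, w\}$; since $v$ lies outside $F_1 \cup F_2 \cup F_3$, the pair $\{u, w\}$ alone must intersect each of the three transversals $F_1, F_2, F_3$. Because $F_1 \cap F_2 \cap F_3 = \emptyset$, no single element of $[n]$ can hit all three $F_i$ simultaneously; any element outside $\{a,b,c,x,y,z\}$ hits none of them. Hence both $u$ and $w$ must lie in $\{a,b,c,x,y,z\}$.

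The final step is a short enumeration of pairs in $\binom{\{a,b,c,x,y,z\}}{2}$ that collectively hit $F_1, F_2, F_3$. Using that each base vertex $x, y, z$ lies in exactly one $F_i$ while each corner $a, b, c$ lies in exactly two, I expect to find precisely six admissible pairs: the three corner--corner pairs $\{a,b\}, \{a,c\}, \{b,c\}$ and the three ``opposite'' pairs $\{x,c\}, \{y,b\}, \{z,a\}$ matching each base vertex to the corner shared by the two edges not containing it. This forces $|\mathcal{F}(v)| \le 6$, which contradicts $|\mathcal{F}(v)| \ge 7$ and completes the proof. The only piece requiring any care is this finite enumeration, but the symmetry of the linear triangle and the fact that $F_1 \cap F_2 \cap F_3 = \emptyset$ make it entirely routine, so I do not anticipate a substantive obstacle.
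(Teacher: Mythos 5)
Your proposal is correct and follows essentially the same argument as the paper: pick a vertex $v$ outside the six vertices of the linear triangle, note each $F\in\mathcal{F}(v)$ must contain one of the (exactly six) $2$-element transversals of the triangle, and contradict $|\mathcal{F}(v)|\ge n-1\ge 7$. Your enumeration of the six admissible pairs matches the paper's list, so there is no substantive difference.
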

\begin{poc}
   Suppose there is a set \( W = \{a, b, c, x, y, z\} \subseteq [n] \) such that \( \{a, x, b\}, \{a, y, c\}, \{b, z, c\} \in \mathcal{C} \). Since \( n \geq 8 \), there exists some vertex \( v \in [n] \setminus W \). By~\cref{claim:LargeDegree}, we have  
\[
|\mathcal{F}(v)| \geq n - 1 \geq 7.
\]
Furthermore, every set in \( \mathcal{G}(v) \) must intersect each of \( \{a, x, b\} \), \( \{a, y, c\} \), and \( \{b, z, c\} \). However, the transversal number of a linear triangle is \( 2 \), and there are exactly \( 6 \) minimal transversal sets for this linear triangle:  
\[
\{a, b\}, \quad \{a, c\}, \quad \{b, c\}, \quad \{a, z\}, \quad \{b, y\}, \quad \{c, x\}.
\]
This contradicts the fact that \( |\mathcal{F}(v)| \geq 7 \). This finishes the proof.
\end{poc}

\subsection{Transversal number of $\mathcal{C}$}\label{subsection:Transversalof C}
Building on the structural properties established in~\cref{subsection:StructralofC}, we proceed by partitioning our analysis according to the transversal number of \(\mathcal{C}\), denoted by \(\tau(\mathcal{C})\), which is defined as the minimum size of a set that intersects every set in \(\mathcal{C}\). Since $\mathcal{C}$ is intersecting, clearly we have $\tau(\mathcal{C})\le 3$.

We first simply show that the transversal number of $\mathcal{C}$ cannot be three.

\begin{claim}\label{claim:TauLe22 }
    \(\tau(\mathcal{C})\le 2.\)
\end{claim}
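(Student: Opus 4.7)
The plan is to argue by contradiction, assuming $\tau(\mathcal{C})=3$. First observe that $\mathcal{C}$ is automatically intersecting: since $B_i=\emptyset$ means no set in $\mathcal{F}$ is disjoint from $F_i$, every member of $\mathcal{C}$ is a transversal of $\mathcal{F}$, and $\mathcal{C}\subseteq\mathcal{F}$ then forces any two members of $\mathcal{C}$ to meet. Hence $\tau(\mathcal{C})\le 3$ holds trivially, and the task reduces to ruling out equality.

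The first step is to apply~\cref{claim:Not2Intersection} to obtain $A,B\in\mathcal{C}$ with $|A\cap B|=1$. Write $A=\{a,b,c\}$ and $B=\{a,d,e\}$, and set $S=\{a,b,c,d,e\}$. The heart of the proof is then a forcing step which shows $\binom{\{b,c,d,e\}}{3}\subseteq\mathcal{C}$. For each $v\in\{b,c,d,e\}$, the pair $\{a,v\}$ is not a transversal of $\mathcal{C}$, so some $D\in\mathcal{C}$ avoids $\{a,v\}$. Since $D$ must also meet $A$ and $B$, two of its elements are constrained to lie in $A\setminus\{a,v\}$ and $B\setminus\{a,v\}$ respectively; call the remaining element $z$. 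If $z\notin S$, then $A$, $B$, $D$ have singleton pairwise intersections at three distinct vertices of $S$ and collectively span six distinct vertices, which is exactly a linear triangle, contradicting~\cref{claim:NoLinearTriangle}. Hence $z\in S$, and a short check shows that $D$ is forced to equal $\{b,c,d,e\}\setminus\{v\}$. Letting $v$ range over $\{b,c,d,e\}$ places all four $3$-subsets of $\{b,c,d,e\}$ inside $\mathcal{C}$.

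With $A$, $B$, and $\binom{\{b,c,d,e\}}{3}$ all in $\mathcal{C}$, every $F\in\mathcal{F}$ must intersect each of these sets. Intersecting all four $3$-subsets of $\{b,c,d,e\}$ forces $|F\cap\{b,c,d,e\}|\ge 2$. A quick case analysis on the location of the third element of $F$ yields only three possible types: $F$ is itself a $3$-subset of $\{b,c,d,e\}$ (at most $4$ such); $F=\{a,x,y\}$ with $x,y\in\{b,c,d,e\}$ (at most $\binom{4}{2}=6$ such); or $F=\{u,x,y\}$ with $u\in[n]\setminus S$, where the requirements $F\cap A\ne\emptyset$ and $F\cap B\ne\emptyset$ force one of $x,y$ into $\{b,c\}$ and the other into $\{d,e\}$ (at most $4(n-5)$ such). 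Summing gives $|\mathcal{F}|\le 4n-10$, and a direct computation shows $4n-10<\binom{n-1}{2}+2$ for every $n\ge 8$, contradicting the standing hypothesis $|\mathcal{F}|\ge\binom{n-1}{2}+2$.

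The delicate step is the forcing argument in the second paragraph: to execute it cleanly one must carefully identify, for each pair $\{a,v\}$, the two elements of $D$ that are forced by the intersection requirements with $A$ and $B$, and then apply~\cref{claim:NoLinearTriangle} to rule out the third element lying outside $S$. Once this is in place, the remainder of the proof is a straightforward counting estimate, and no further structural tools beyond~\cref{claim:Not2Intersection} and~\cref{claim:NoLinearTriangle} are needed.
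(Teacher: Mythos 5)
Your proof is correct, but it takes a genuinely different route from the paper's. The paper's argument is more direct: fixing a single $\{x,y,z\}\in\mathcal{C}$ and using only $\tau(\mathcal{C})=3$ to produce, for each pair and each singleton in $\{x,y,z\}$, members of $\mathcal{C}$ avoiding them, it bounds $|\mathcal{F}(\{x,y\},\{x,y,z\})|\le 2$ and $|\mathcal{F}(\{x\},\{x,y,z\})|\le 4$, giving the absolute bound $|\mathcal{F}|\le 19$; it never invokes \cref{claim:Not2Intersection} or \cref{claim:NoLinearTriangle}. You instead exploit those two structural claims to force $\binom{\{b,c,d,e\}}{3}\subseteq\mathcal{C}$ and then count, and I verified the details: for each $v\in\{b,c,d,e\}$ the set $D$ avoiding $\{a,v\}$ is indeed pinned to $\{b,c,d,e\}\setminus\{v\}$ once the linear-triangle configuration is excluded, and your three types exhaust $\mathcal{F}$, giving $|\mathcal{F}|\le 4n-10$. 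Two small caveats worth noting. First, your route depends on the standing assumption $|\mathcal{C}|\ge 4$ (needed for \cref{claim:Not2Intersection}); this is legitimately in force at this point in the paper, but the paper's proof of the claim does not need it. Second, your final bound is much weaker than the paper's constant $19$ and only barely suffices at the boundary case $n=8$, where $4n-10=22$ against the hypothesis $|\mathcal{F}|\ge\binom{7}{2}+2=23$ --- a margin of exactly one, so the inequality should be checked explicitly at $n=8$ rather than waved through as ``a direct computation.'' What your approach buys is a stronger structural conclusion along the way (an explicit forced configuration inside $\mathcal{C}$), at the cost of relying on heavier machinery and a tighter numerical finish.
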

\begin{poc}
 Suppose that $\tau(\mathcal{C})=3$. Let \( \{x, y, z\} \in \mathcal{C} \). We claim that  
\[
|\mathcal{F}(\{x, y\}, \{x, y, z\})| \leq 2.
\]
To see this, note that \( \{x, y\} \) cannot be a transversal set of \( \mathcal{C} \) since \( \tau(\mathcal{C}) = 3 \). Therefore, there exists some \( F \in \mathcal{C} \) such that \( F \cap \{x, y\} = \emptyset \) and \( z \in F \).  Let \( F = \{z, a, b\} \). Then, any set in \( \mathcal{F}(\{x, y\}, \{x, y, z\}) \) must contain either \( a \) or \( b \). This implies \(|\mathcal{F}(\{x, y\}, \{x, y, z\})| \leq 2.\)

 We further claim that \( |\mathcal{F}(\{x\}, \{x, y, z\})| \leq 4 \). Consider any set \( E = \{u, v\} \in \mathcal{F}(\{x\}, \{x, y, z\}) \). This set can be generated as follows:  

\begin{enumerate}
    \item Since \( \tau(\mathcal{C}) = 3 \), there exists some \( F_{1} \in \mathcal{C} \) such that \( x \notin F_{1} \). Therefore, we must have \( u \in F_{1} \setminus \{y, z\} \).
    \item Applying \( \tau(\mathcal{C}) = 3 \) again, there exists some \( F_{2} \in \mathcal{C} \) such that \( x, u \notin F_{2} \). Thus, \( v \in F_{2} \setminus \{x, y, z\} \). 
\end{enumerate}  
Since both \( F_{1} \) and \( F_{2} \) must intersect \( \{y, z\} \), the number of valid choices for \( u \) and \( v \) is at most \( 2 \) each. Therefore,  
\[
|\mathcal{F}(\{x\}, \{x, y, z\})| \leq 2 \times 2 = 4.
\]
By symmetry, applying the same argument to \( \{y\} \) and \( \{z\} \), we have  
\[
|\mathcal{F}| \leq 3 \times 2 + 3 \times 4 + 1 = 19 < \binom{n - 1}{2} + 1,
\]
where the last inequality holds since \( n \geq 8 \). This completes the proof.
\end{poc}
We then proceed by dividing the final argument into two parts, each aimed at ruling out the cases  $\tau(\mathcal{C})=2$ and $\tau(\mathcal{C})=1$.

\subsubsection{$\tau(\mathcal{C})=2$}
When $\tau(\mathcal{F})=2$, we can first provide a lower bound for $|\mathcal{C}|$.
\begin{claim}\label{claim:Tau2C7}
    If $\tau(\mathcal{C})=2$, then $|\mathcal{C}|\ge 7$.
\end{claim}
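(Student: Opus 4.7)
The strategy is to assume, for contradiction, that $|\mathcal{C}| \leq 6$, and derive $|\mathcal{F}| \leq \binom{n-1}{2} + 1$, contradicting the standing hypothesis. I would begin by fixing a minimum $2$-transversal $\{a, b\}$ of $\mathcal{C}$. Minimality forces both subfamilies $\mathcal{C}_{a} := \{F \in \mathcal{C}: a \in F,\ b \notin F\}$ and $\mathcal{C}_{b} := \{F \in \mathcal{C}: b \in F,\ a \notin F\}$ to be non-empty; choosing $F_1 \in \mathcal{C}_{a}$ and $F_2 \in \mathcal{C}_{b}$, the intersecting property of $\mathcal{C}$ produces a shared element $c \in (F_1 \setminus \{a\}) \cap (F_2 \setminus \{b\})$. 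By~\cref{claim:Not2Intersection} there is also a pair in $\mathcal{C}$ whose intersection has size exactly one, and by~\cref{claim:NoLinearTriangle} no three sets of $\mathcal{C}$ form a linear triangle.

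The decisive observation is that each $F \in \mathcal{C}$ has $B_F = \emptyset$, which by definition of the witness means precisely that $F$ is a transversal of $\mathcal{F}$: every $F' \in \mathcal{F}$ meets $F$. Applying this locally at a vertex $v$ lying outside $V := \bigcup_{F \in \mathcal{C}} F$, each $F' \in \mathcal{F}(v)$ must satisfy that $F' \setminus \{v\}$ is a $2$-transversal of $\mathcal{C}$. Combined with $|\mathcal{F}(v)| \geq n - 1$ from~\cref{claim:LargeDegree}, this yields at least $n - 1$ distinct $2$-transversals of $\mathcal{C}$ lying in $[n] \setminus \{v\}$. Since $|V| \leq 2 + 2|\mathcal{C}| \leq 14$, such a $v$ is available whenever $n \geq 15$, so this branch will handle the large-$n$ regime once one bounds the number of $2$-transversals.

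Next I would bound the number of $2$-transversals of $\mathcal{C}$. Any $2$-transversal $\{u, w\}$ must meet each set of $\mathcal{C}$, so for a given $u$ the possible partners $w$ must lie in $\bigcap_{F \in \mathcal{C},\ u \notin F} F$. Using $F_1, F_2$ together with the extra $1$-intersecting pair from~\cref{claim:Not2Intersection}, one shows that this intersection is typically small, and the forbidden linear-triangle structure from~\cref{claim:NoLinearTriangle} further prunes the count. A careful accounting then bounds the number of $2$-transversals by a constant depending only on $|\mathcal{C}| \leq 6$, which is incompatible with $n - 1$ once $n \geq 15$.

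For the remaining range $8 \leq n \leq 14$, where $V$ may exhaust $[n]$, I would supplement with a direct non-edge count for $\mathcal{B}$. The two sets $F_1, F_2 \in \mathcal{C}$ with $|F_1 \cap F_2| = 1$ contribute $6$ non-edges inside $F_1 \cup F_2$ and, mirroring the derivation before~\cref{claim:Not2Intersection}, at least $3$ non-edges per external vertex; each further member of $\mathcal{C}$ adds still more non-edges, and~\cref{claim:XYZLE2} sharpens the bound on $|L|$ once two $1$-intersecting sets in $\mathcal{C}$ are available. Plugging into $|\mathcal{F}| = |\mathcal{B}| + |L| + |\mathcal{C}|$ from~\cref{cor:MainInequality} should produce the required $|\mathcal{F}| \leq \binom{n-1}{2} + 1$. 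The main obstacle I anticipate is the intricate bookkeeping for this small-$n$ regime, especially when $|\mathcal{C}| \in \{4, 5\}$ and members of $\mathcal{C}$ overlap heavily so that additional sets contribute few new non-edges; resolving this likely requires a finer case analysis based on the precise partition of $\mathcal{C}$ into $\mathcal{C}_{a}$, $\mathcal{C}_{b}$, and $\mathcal{C}_{ab}$, combined with the same transversal-of-$\mathcal{F}$ argument applied at vertices of low degree within $V$.
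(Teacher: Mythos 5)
Your overall skeleton (non-edge counting in $\mathcal{B}$ combined with $|\mathcal{F}|=|\mathcal{B}|+|L|+|\mathcal{C}|$ from \cref{cor:MainInequality} and $|L|\le n-2$) matches the paper's, and your large-$n$ branch is a genuinely nice alternative: since every $F\in\mathcal{C}$ is a transversal of $\mathcal{F}$, a vertex $v$ outside $\bigcup_{F\in\mathcal{C}}F$ forces each $F'\in\mathcal{F}(v)$ to yield a distinct $2$-element transversal $F'\setminus\{v\}$ of $\mathcal{C}$, and using the two sets from \cref{claim:Not2Intersection} meeting in one point one can bound the number of such transversals by an absolute constant (at most $7$), contradicting $|\mathcal{F}(v)|\ge n-1$. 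This is essentially the mechanism the paper deploys later to kill $\tau(\mathcal{C})=2$ outright, and it is sound for $n\ge 15$.

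The genuine gap is the range $8\le n\le 14$ (really $n\in\{8,9,10\}$ even after your counting is optimized), which is exactly where the content of the claim lives, and which you explicitly leave as ``intricate bookkeeping.'' With only the two $1$-intersecting sets $\{x,y,z\},\{z,u,v\}$ you get $6$ internal non-edges plus $3(n-5)$ external ones, hence $|\mathcal{F}|\le\binom{n}{2}-(3n-9)+(n-2)+6=\binom{n-1}{2}-n+12$, which contradicts $|\mathcal{F}|\ge\binom{n-1}{2}+2$ only for $n\ge 11$. The missing idea that closes this is to use $\tau(\mathcal{C})=2$ once more: since $\{z\}$ is not a transversal of $\mathcal{C}$, some $F\in\mathcal{C}$ meets both $\{x,y\}$ and $\{u,v\}$, and \cref{claim:NoLinearTriangle} forces $F\subseteq\{x,y,u,v\}$, say $F=\{x,u,v\}$. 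This third set supplies the three extra non-edges ($\{x,u\}$, $\{x,v\}$, and, via \cref{claim:MissTwoedges} applied to $F$ and the vertex $y$, one of $\{y,u\},\{y,v\}$) that bring the total to $3n-6$ and make the inequality work for all $n\ge 8$. Your fallback of a ``finer case analysis based on the precise partition of $\mathcal{C}$'' does not identify this forced configuration, and without it the small-$n$ case is not proved.
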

\begin{poc}
By \cref{claim:XYZLE2}, if $|L| = n-1$, then $\tau(\mathcal{C}) = 1$; hence, we must have $|L| \le n-2$. By~\cref{claim:Not2Intersection}, there exist two sets in \(\mathcal{C}\) whose intersection has size one. Let \(\{x, y, z\}, \{z, u, v\} \in \mathcal{C}\) be such sets with \(\{x, y, z\} \cap \{z, u, v\} = \{z\}\). Since \(\tau(\mathcal{C}) = 2\), the element \(z\) alone cannot intersect all sets in \(\mathcal{C}\). Therefore, there exists some \(F \in \mathcal{C}\) such that \(F \cap \{x, y\} \neq \emptyset\) and \(F \cap \{u, v\} \neq \emptyset\).  

By \cref{claim:NoLinearTriangle}, we have $F \subseteq \{x, y, u, v\}$. Without loss of generality, assume $F = \{x, u, v\}$. By \cref{claim:MissTwoedges}, for each $w \in [n] \setminus \{x, y, z, u, v\}$, at least three of the pairs $\{w, x\}, \{w, y\}, \{w, z\}, \{w, u\}, \{w, v\}$ are not in $\mathcal{B}$. Additionally,  
\begin{itemize}
\item At least one of $\{y, u\}$ or $\{y, v\}$ is not in $\mathcal{B}$.  
\item The pairs $\{x, y\}, \{x, z\}, \{y, z\}, \{x, u\}, \{x, v\}, \{u, v\}, \{z, u\}, \{z, v\}$ are not in $\mathcal{B}$.
\end{itemize}
In total, the number of missing pairs in $\mathcal{B}$ is at least $3(n - 5) + 1 + 8 = 3n - 6$. Then by~\cref{lemma: NoTwoB1},
\begin{align*}
|\mathcal{F}| &\le \binom{n}{2} -  (3n - 6)  + |L| + |\mathcal{C}|\\[3pt]
&\le \binom{n-1}{2}-2n+5+ n-2 + |\mathcal{C}|\\[3pt]
&= \binom{n-1}{2}+1 -n+2 + |\mathcal{C}|.
\end{align*}
Since $n\ge 8$, if $|\mathcal{C}|\leq 6$, then $|\mathcal{F}|\le\binom{n-1}{2}+1$, a contradiction to our assumption. This finishes the proof.
\end{poc}
Let \(\{x, y\}\)\footnote{Here the set \{x,y\} is unrelated to the one in~\cref{claim:Tau2C7}} be a transversal of \(\mathcal{C}\). Since neither \(x\) nor \(y\) alone intersects all sets in \(\mathcal{C}\), it follows that both subfamilies \(\mathcal{C}(\{x\}, \{x, y\})\) and \(\mathcal{C}(\{y\}, \{x, y\})\) are non-empty. Furthermore, these two families are cross-intersecting. 

We now show that each of $\mathcal{C}(\{x\}, \{x, y\})$ and $\mathcal{C}(\{y\}, \{x, y\})$ is intersecting. By symmetry, it suffices to consider $\mathcal{C}(\{x\}, \{x, y\})$. Suppose $\{a, b\}, \{c, d\} \in \mathcal{C}(\{x\}, \{x, y\})$. Then, any set in $\mathcal{C}(\{y\}, \{x, y\})$ must intersect both $\{a, b\}$ and $\{c, d\}$. Without loss of generality, assume $\{y, a, c\} \in \mathcal{C}$. Then, the sets $\{x, a, b\}$, $\{x, c, d\}$, and $\{y, a, c\}$ form a linear triangle in $\mathcal{C}$, contradicting \cref{claim:NoLinearTriangle}. Moreover, we have
\begin{equation}\label{trans2Codegree}
    |\mathcal{C}(x,y)|\le 3.
\end{equation}
To see this, suppose there are sets $\{x,y,z_{i}\}\in\mathcal{C}$ for $i=1,2,3,4$. then by maximality, there exists some set $F\in\mathcal{F}$ such that $F\cap\{x,y,z_{1}\}=\{z_{1}\}$, however, $F$ must contain $z_{2},z_{3},z_{4}$, which is impossible.

Now we regard $\mathcal{C}(\{x\}, \{x, y\})\cup \mathcal{C}(\{y\}, \{x, y\})$ as a graph. By the above analysis, we can see it is intersecting, therefore, it is either a star, or a triangle.

If $\mathcal{C}(\{x\}, \{x, y\})\cup \mathcal{C}(\{y\}, \{x, y\})$ is a star with center $z$. By~\cref{claim:Tau2C7} and~\eqref{trans2Codegree}, we can see the number of edges in the graph $\mathcal{C}(\{x\}, \{x, y\})\cup \mathcal{C}(\{y\}, \{x, y\})|$ is at least $2$, which yields that $|F\cap \{x,y,z\}|\geq 2$ for each $F\in \mathcal{C}$.

A crucial observation is that, suppose $\{x,y,z\}\in \mathcal{C}$, then by~\cref{claim:Tau2C7} and~\eqref{trans2Codegree}, we can see 
$|\mathcal{C}(\{x,y\},\{x,y,z\})|=|\mathcal{C}(\{x,z\},\{x,y,z\})|=|\mathcal{C}(\{y,z\},\{x,y,z\})|=2$, and $|\mathcal{C}|=7$. By~\cref{claim:NoLinearTriangle}, $\mathcal{C}$ does not contain a linear triangle, therefore, there exist distinct elements $u,v\in [n]\setminus\{x,y,z\}$ such that 
\[
\mathcal{C} =\{\{x,y,z\},\{x,y,u\},\{x,y,v\},\{x,z,u\},\{x,z,v\},\{y,z,u\},\{y,z,v\} \}.
\]
Then in this case, the all possible minimal transversal sets of size 2 in $\mathcal{C}$ are $\{\{x,y\},\{x,z\},\{y,z\}\}$. Since $n\ge 8$, pick $w\in [n]\setminus \{x,y,z,u,v\}$. Then we have $|\mathcal{F}(w)|\leq 3<n-1$, contradicting~\cref{claim:LargeDegree}. 

Therefore we can assume $\{x,y,z\}\notin \mathcal{C}$. In this case, note that 
\[
|\mathcal{C}(\{x,y\},\{x,y,z\})|+|\mathcal{C}(\{x,z\},\{x,y,z\})|+|\mathcal{C}(\{y,z\},\{x,y,z\})|=|\mathcal{C}|\geq 7.
\]
 By symmetry assume 
\[
|\mathcal{C}(\{x,y\},\{x,y,z\})|\leq |\mathcal{C}(\{x,z\},\{x,y,z\})|\leq |\mathcal{C}(\{y,z\},\{x,y,z\})|. 
\]
By~\eqref{trans2Codegree}, $|\mathcal{C}(\{x,y\},\{x,y,z\})|\leq 3$, $|\mathcal{C}(\{x,z\},\{x,y,z\})|\leq 3$, $|\mathcal{C}(\{y,z\},\{x,y,z\})|\leq 3$, we have
\[
|\mathcal{C}(\{x,y\},\{x,y,z\})|\geq 1,\ |\mathcal{C}(\{x,z\},\{x,y,z\})|\geq 2,\ |\mathcal{C}(\{y,z\},\{x,y,z\})|\geq 3.
\]
Thus  one can greedily find a linear triangle, a contradiction to~\cref{claim:NoLinearTriangle}. Therefore, it is impossible that $\mathcal{C}(\{x\}, \{x, y\})\cup \mathcal{C}(\{y\}, \{x, y\})$ is a star. 

It remains to show the case that $\mathcal{C}(\{x\}, \{x, y\})\cup \mathcal{C}(\{y\}, \{x, y\})$ is a triangle on $\{a,b,c\}$ cannot occur. Note that $|\mathcal{C}|\geq 7$ by~\cref{claim:Tau2C7} and $|\mathcal{C}(x,y)|\leq 3$ by~\eqref{trans2Codegree}, we have
\[
|\mathcal{C}(\{x\}, \{x, y\})|+|\mathcal{C}(\{y\}, \{x, y\})|\geq 4.
\]
By pigeonhole principle and by symmetry, we assume that $\{x,a,b\},\{y,a,c\}\in \mathcal{C}$. By~\cref{claim:NoLinearTriangle}, $\mathcal{C}$ does not contain a linear triangle, we can see that $\mathcal{C}\subseteq \binom{\{x,y,a,b,c\}}{3}$. Observe that for each triple $S\in \mathcal{C}$, $\{x,y,a,b,c\}\setminus S$ cannot be a transversal set of $\mathcal{C}$. Since $|\mathcal{C}(\{x\}, \{x, y\})|+|\mathcal{C}(\{y\}, \{x, y\})|\geq 4$, and each set in $\mathcal{C}$ must intersect $\{x,y\}$, we infer that the number of minimal transversal sets of size $2$ in $\mathcal{C}$ is at most $\binom{5}{2}-4=6$. Moreover, since $n\ge 8$, pick $p\in [n]\setminus\{x,y,a,b,c\}$. We can see that $|\mathcal{F}(p)|\leq 6<n-1$, contradicting~\cref{claim:LargeDegree}. 

Based on the above analysis, it is impossible that $\tau(\mathcal{C})=2$.

\subsubsection{$\tau(\mathcal{C})=1$}
$\tau(\mathcal{C})$ means that $\mathcal{C}$ is a star, by~\cref{claim:Not2Intersection}, $\mathcal{C}$ is not $2$-intersecting, and since we assume $|\mathcal{C}|\ge 4$, there exist two sets in \(\mathcal{C}\) whose intersection has size one. Let \(\{x, y, z\}, \{z, u, v\} \in \mathcal{C}\) be such sets with \(\{x, y, z\} \cap \{z, u, v\} = \{z\}\). Indeed, $z$ is also the center of the star $\mathcal{C}$. Then by~\cref{claim:MissTwoedges}, for each $w\in [n]\setminus\{x,y,z,u,v\}$, at least $3$ of $\{x,w\}$, $\{y,w\}$, $\{z,w\}$, $\{u,w\}$ and $\{v,w\}$ do not belong to $\mathcal{B}$. Moreover, by~\cref{claim:MissTwoedges} again, at least one of $\{y,u\}$ and $\{y,v\}$ is not in $\mathcal{B}$ and at least one of $\{x,u\}$ and $\{x,v\}$ is not in $\mathcal{B}$. It is also obvious that 
\[\big\{\{x,y\},\{x,z\},\{y,z\},\{u,v\},\{u,z\},\{v,z\}\big\}\cap\mathcal{B}=\emptyset.\]
Therefore, there are at least $3(n-5)+6+2$ pairs are not in $\mathcal{B}$. Then by~\cref{lemma: NoTwoB1}, we have
\begin{align*}
|\mathcal{F}| &\le \binom{n}{2} -  (3n - 7)  + |L| + |\mathcal{C}|\\[3pt]
&\le \binom{n-1}{2}-2n+6+ n-1 + |\mathcal{C}|\\[3pt]
&\le \binom{n-1}{2}+1 -n+4 + |\mathcal{C}|\\[3pt]
&\leq  \binom{n - 1}{2} + 1+|\mathcal{C}|-4.
\end{align*}
Therefore we can assume that \(|\mathcal{C}| \ge 5\), otherwise $|\mathcal{F}|\le\binom{n-1}{2}+1$, a contradiction to our assumption. We then define a 2-uniform set system \(\mathcal{Z}\) on the ground set \(Z\subseteq [n] \setminus \{z\}\) as  
\[
\mathcal{Z} := \{F \setminus \{z\} : F \in \mathcal{C} \subseteq \mathcal{F} \}.
\]  
Equivalently, we may view \(\mathcal{Z}\) as a graph on the vertex set \(Z\subseteq [n] \setminus \{z\}\), where $Z$ consists of all elements which appear in some set of $\mathcal{Z}$. Note that \(|\mathcal{Z}| = |\mathcal{C}| \ge 5\), our next goal is to establish several useful properties of the graph \(\mathcal{Z}\).
\begin{claim}\label{claim:PropertyZ}
The followings hold.
\begin{enumerate}
    \item[\textup{(1)}] $\mathcal{Z}$ has matching number  $2$ and maximum degree at most $3$.
    \item[\textup{(2)}] $\mathcal{Z}$ is $K_{2,2}$-free.
     \item[\textup{(3)}] If there is a vertex $u$ having $3$ neighbors in graph $\mathcal{Z}$, say $\{v_{1},v_{2},v_{3}\}$, then each other edge in $\mathcal{Z}$ must contain at least one vertex in $\{u,v_{1},v_{2},v_{3}\}.$
     \item[\textup{(4)}] If $|Z|\le 6$, then the transversal number of $\mathcal{Z}$ is $2$, and the number of $2$-element transversal sets of $\mathcal{Z}$ is at least $2$.
\end{enumerate}
\end{claim}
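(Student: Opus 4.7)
The plan is to prove the four parts of the claim in order, each relying on~\cref{claim:LargeDegree}, the witness-$\emptyset$ structure of sets in $\mathcal{C}$, and the fact that $\mathcal{C}$ is a star at $z$ (so no set in $\mathcal{C}$ may avoid $z$). For Part (1), the matching lower bound comes immediately from the pair $\{z,x,y\},\{z,u,v\}\in\mathcal{C}$ with $\{x,y\}\cap\{u,v\}=\emptyset$ already selected at the start of this subsection. To rule out a matching of size three, I would take three disjoint edges $\{a_i,b_i\}$ and pick $w\in[n]\setminus(\{z\}\cup\bigcup_i\{a_i,b_i\})$, which exists since $n\ge 8$; any $F\in\mathcal{F}(w)$ with $z\notin F$ would need its two remaining coordinates to simultaneously meet three pairwise disjoint pairs, which is impossible, so $z\in F$ for every such $F$ and $|\mathcal{F}(w)|\le n-2$ contradicts the density bound. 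For the degree bound, assuming $u$ has four neighbors $v_1,\dots,v_4$, I would apply the witness-$\emptyset$ condition of $\{z,u,v_1\}$ at the singleton $\{v_1\}$: the realizing $F$ contains $v_1$ and avoids $z,u$, so hitting each $\{z,u,v_i\}$ for $i=2,3,4$ forces $v_2,v_3,v_4\in F$, contradicting $|F|=3$. Part (3) runs in the same spirit: if an edge $\{a,b\}\in\mathcal{Z}$ were disjoint from $\{u,v_1,v_2,v_3\}$, then $\{z,a,b\}$ would be an extra transversal, and the witness-$\emptyset$ of $\{z,u,v_1\}$ at $\{v_1\}$ would force the realizer to equal $\{v_1,v_2,v_3\}$, which is disjoint from $\{z,a,b\}$, a contradiction.

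Part (2) will be the main obstacle. I would suppose $\mathcal{Z}$ contains $K_{2,2}$ with parts $\{u_1,u_2\}$ and $\{v_1,v_2\}$, producing four transversals $\{z,u_i,v_j\}\in\mathcal{C}$. For each pair $(i,j)$, realizing the subset $\{u_i,v_j\}$ yields a set in the family $\{A,B,C,D\}$ where $A=\{u_1,u_2,v_1\}$, $B=\{u_1,u_2,v_2\}$, $C=\{u_1,v_1,v_2\}$, $D=\{u_2,v_1,v_2\}$. A short covering check forces at least $\{A,B\}\subseteq\mathcal{F}$ or $\{C,D\}\subseteq\mathcal{F}$; by symmetry assume $\{A,B\}\subseteq\mathcal{F}$. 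Because $z\notin A$, $A$ cannot be in $\mathcal{C}$, so inspection of realized intersections forces its witness to equal $\{v_1\}$, giving the rule that every $F\in\mathcal{F}$ containing $v_1$ must intersect $\{u_1,u_2\}$. Feeding this rule into the witness-$\emptyset$ of $\{z,u_1,v_1\}$ at the subset $\{v_1\}$ pins the realizer uniquely as $D$, and symmetrically $C\in\mathcal{F}$ from $\{z,u_2,v_1\}$. Now $|\mathcal{C}|\ge 5$ produces an extra edge $\{a,b\}$ of $\mathcal{Z}$; combining the requirement that each of $A,B,C,D$ hit $\{z,a,b\}$ with the matching bound from Part (1) forces $\{a,b\}\in\{\{u_1,u_2\},\{v_1,v_2\}\}$. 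Either option leads to a contradiction: if $\{a,b\}=\{v_1,v_2\}$, then $\{z,v_1,v_2\}\cap A=\{v_1\}$ realizes $\{v_1\}$, violating the witness rule for $A$; if $\{a,b\}=\{u_1,u_2\}$, then (since the analogous inspection shows $C$ has witness $\{u_1\}$) $\{z,u_1,u_2\}\cap C=\{u_1\}$ realizes $\{u_1\}$, violating the witness rule for $C$.

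For Part (4), I would pick $v\in[n]\setminus(\{z\}\cup Z)$, which exists because $|Z|\le 6$ and $n\ge 8$. For any $F\in\mathcal{F}(v)$ with $z\notin F$, the condition $v\notin Z$ turns the requirement that $F$ hit every $\{z,a,b\}\in\mathcal{C}$ into the requirement that $F\setminus\{v\}$ be a $2$-element transversal of $\mathcal{Z}$. If $\tau(\mathcal{Z})\ge 3$, no such $F$ exists, so $|\mathcal{F}(v)|\le n-2$ contradicts the density bound; combined with the matching lower bound, $\tau(\mathcal{Z})=2$. To show that at least two $2$-transversals exist, I would suppose the unique one is $\{x,y\}$. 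Density then forces $\{v,x,y\}\in\mathcal{F}$ and every $\{v,z,?\}\in\mathcal{F}$. Standard unique-cover analysis gives $|N(x)\setminus\{y\}|,|N(y)\setminus\{x\}|\ge 2$, so the subsets $\{x\}$ and $\{y\}$ of $\{v,x,y\}$ are realized through $\{z,x,c\}$ and $\{z,y,c'\}$ for appropriate edges $\{x,c\},\{y,c'\}\in\mathcal{Z}$; the subsets $\{v\},\{v,x\},\{v,y\}$ are realized through $\{v,z,?\}$'s; and $\{x,y\}$ is realized by $\{z,x,y\}$ whenever $\{x,y\}\in\mathcal{Z}$, while if $\{x,y\}\notin\mathcal{Z}$, combining $K_{2,2}$-freeness (Part (2)) with Part (3) applied to the degree-$3$ vertex forced by unique cover gives a direct contradiction. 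Hence the witness of $\{v,x,y\}$ is $\emptyset$, placing $\{v,x,y\}$ in $\mathcal{C}$, but $z\notin\{v,x,y\}$ contradicts the star structure of $\mathcal{C}$.
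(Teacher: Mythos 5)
Your proof is correct, and parts (1), (3) and (4) follow essentially the paper's route: the same realizations of proper subsets of members of $\mathcal{C}$, the same use of the fact that every member of $\mathcal{C}$ is a transversal of $\mathcal{F}$, and the same density contradiction via \cref{claim:LargeDegree}. (In (4) your handling of the subcase $\{x,y\}\notin\mathcal{Z}$, via $K_{2,2}$-freeness plus part (3) applied to the forced degree-$3$ vertex, is in fact more careful than the paper's write-up, which silently uses $\{z,a,b\}\in\mathcal{F}$ when realizing the subset $\{a,b\}$.) The genuine divergence is in part (2). The paper fixes the single transversal $\{a,d,z\}$, notes that the realizer of the subset $\{a,d\}$ must be $\{a,b,d\}$ or $\{a,c,d\}$, and in each case shows that the only candidate witness is a singleton whose validity would force the corresponding link $\mathcal{F}(a)$ or $\mathcal{F}(d)$ to have at most five members, contradicting \cref{claim:LargeDegree}. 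You instead prove that all four of $A,B,C,D$ lie in $\mathcal{F}$ by propagating witness constraints (the witness of $A$ is forced to be $\{v_1\}$, which pins the realizer of $\{v_1\}$ inside $\{z,u_1,v_1\}$ to be $D$, and symmetrically for $C$), and then use the fifth edge of $\mathcal{Z}$ guaranteed by $|\mathcal{C}|\ge 5$ to exhibit a set realizing the forbidden witness. Your version needs $|\mathcal{C}|\ge 5$ (available at this point, and already invoked in part (1)), whereas the paper's does not; in exchange it stays entirely within the witness calculus and avoids the link-size count. Both arguments are valid.
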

\begin{poc}
    For (1), suppose that there is a vertex $a$ having four neighbors $b_{1},b_{2},b_{3},b_{4}$ in $\mathcal{Z}$. Note that by maximality of witness, there exists a set $F\in\mathcal{F}$ such that $F\cap\{z,a,b_{1}\}=\{b_{1}\}$, and $F$ must contain $b_{2},b_{3},b_{4}$, which is impossible.
    
    Suppose that there is a matching $\{a_{1},b_{1}\},\{a_{2},b_{2}\}$ and $\{a_{3},b_{3}\}$ in $\mathcal{Z}$, then by maximality of witness, there exists a set $F\in \mathcal{F}$ such that $F\cap \{z,a_{1},b_{1}\}=\{a_{1},b_{1}\}$, and $F$ must intersect $\{a_{2},b_{2}\}$ and $\{a_{3},b_{3}\}$, which is impossible. Since $\mathcal{Z}$ has at least $5$ edges, and the maximum degree of $\mathcal{Z}$ is at most $3$, it is easy to see $\mathcal{Z}$ contains a matching of size $2$.

    For (2), Suppose that $a,b,c,d$ form a copy of $K_{2,2}$ in $\mathcal{Z}$, then $\{z,a,c\},\{z,a,d\},\{z,b,c\},\{z,b,d\}\in \mathcal{C}$. Consider the set $\{a,d,z\}$. By maximality of witness, there exist sets $F_{1},F_{2},F_{3}\in\mathcal{F}$ with the following properties.
    \begin{enumerate}
        \item $F_{1}\cap\{a,d,z\}=\{a,d\}$. Since $\{z,b,c\}\in\mathcal{C}$, we can see $F_{1}$ is either $\{a,b,d\}$ or $\{a,c,d\}$.
        \item $F_{2}\cap\{a,d,z\}=\{a\}$, since $\{z,b,d\}\in\mathcal{C}$, $F_{2}$ contains $b$. 
        \item $F_{3}\cap\{a,d,z\}=\{d\}$, since $\{z,a,c\}\in\mathcal{C}$, $F_{3}$ contains $c$.
    \end{enumerate}
Since $z\notin F_{1}$, $F_{1}\notin \mathcal{C}$, we conclude that the size of witness of $F_{1}$ is either $1$ or $2$. We now examine the possible forms of $F_{1}$ accordingly.

If \( F_1 = \{a, c, d\} \), we can immediately observe the following intersections: \begin{enumerate}
    \item \( \{z, b, d\} \cap \{a, c, d\} = \{d\} \) and \( \{z, b, c\} \cap \{a, c, d\} = \{c\} \). 
    \item \( \{z, a, c\} \cap \{a, c, d\} = \{a, c\} \), \( \{z, a, d\} \cap \{a, c, d\} = \{a, d\} \) and \( F_3 \cap \{a, c, d\} = \{c, d\} \).  
\end{enumerate}  
Furthermore, if there is no set \( F \in \mathcal{F} \) such that \( F \cap \{a, c, d\} = \{a\} \), then the family \( \mathcal{F}(a) \) can only consist of the following sets:  
\[
\{a, c, d\}, \{a, b, c\}, \{z, a, c\}, \{z, a, d\}, \{a, b, d\}.
\]  
This leads to a contradiction, as \( |\mathcal{F}(a)| \geq n - 1 \geq 7 \) by \cref{claim:LargeDegree}. Therefore, if $F_{1}=\{a,c,d\}$, we cannot find a witness set of size $1$ or $2$, a contradiction. 

If $F_{1}=\{a,b,d\}$, we have the following intersections:
\begin{enumerate}
    \item $\{z,a,c\}\cap\{a,b,d\}=\{a\}$, $\{z,b,c\}\cap\{a,b,d\}=\{b\}$.
    \item $\{z,a,d\}\cap\{a,b,d\}=\{a,d\}$, $F_{2}\cap\{a,b,d\}=\{a,b\}$ and $\{z,b,d\}\cap\{a,b,d\}=\{b,d\}$.
\end{enumerate}
Furthermore, if there is no set $F\in\mathcal{F}$ such that $F\cap\{a,b,d\}=\{d\}$, then $\mathcal{F}(d)$ can only consist of the following sets:
\[\{z,a,d\},\{a,c,d\},\{a,b,d\},\{b,c,d\},\{z,b,d\},\]
which also contradicts~\cref{claim:LargeDegree}. Therefore, $F_{1}$ cannot be $\{a,b,d\}$. This completes the proof of property (2).

For (3), suppose there exists an edge \(\{x, y\} \in \mathcal{Z}\) such that \(\{x, y\} \cap \{u, v_1, v_2, v_3\} = \emptyset\). Consider the set \(\{u, v_1, z\} \in \mathcal{C}\). By the maximality of witness, there exists a set \(F \in \mathcal{F}\) such that \(F \cap \{u, v_1, z\} = \{v\}\). This implies that \(F\) must contain \(\{v_2, v_3\}\) and satisfy \(F \cap \{x, y\} \neq \emptyset\). However, this leads to a contradiction, as \(F\) cannot simultaneously contain \(\{v_2, v_3\}\) and intersect with \(\{x, y\}\). This completes the proof of (3).

For (4), first since the matching number of $\mathcal{Z}$ is $2$, the transversal number of $\mathcal{Z}$ is at least $2$. If \( |Z| \leq 6 \), suppose that the transversal number of $\mathcal{Z}$ is $3$, since $n\ge 8$, for any element $v\in [n]\setminus\{Z\cup \{z\}\}$, $\mathcal{F}(v)$ must contain $z$, which implies $|\mathcal{F}(v)|\le n-2$, a contradiction to to~\cref{claim:LargeDegree}. Therefore we conclude the transversal number of $\mathcal{Z}$ is $2$.

Now suppose that \( \{a, b\} \) is the unique \( 2 \)-element transversal set. Then if $x\in [n]\setminus\{Z\cup\{z\}\}$, since \( |\mathcal{F}(x)| \geq n-1 \) by~\cref{claim:LargeDegree}, the set \( \mathcal{F}(x) \) must take the form:
\[
\mathcal{F}(x) = \{x, a, b\} \cup \left( \bigcup_{v \in [n] \setminus \{x, z\}} \{x, z, v\} \right).
\]
Furthermore, since \( \{a, b\} \) is the minimal transversal set, there exist edges \( \{a, c_1\} \) and \( \{b, c_2\} \) in \( \mathcal{Z} \). Since $\{x,a,b\}\notin\mathcal{C}$, the witness of $\{x,a,b\}$ must be of size $1$ or $2$. However, observe that
\begin{enumerate}
    \item $\{x,z,a\}\cap\{x,a,b\}=\{x,a\}$, $\{x,z,b\}\cap\{x,a,b\}=\{x,b\}$ and $\{z,a,b\}\cap\{x,a,b\}=\{a,b\}$.
    \item $\{z,a,c_{1}\}\cap\{x,a,b\}=\{a\}$, $\{z,b,c_{2}\}\cap\{x,a,b\}=\{b\}$ and $\{x,z,v\}\cap\{x,a,b\}=\{x\}$ for any $v\notin \{a,b,x\}$. 
\end{enumerate}
Therefore, we cannot find a non-empty witness of $\{x,a,b\}$, a contradiction. This finishes the proof.
\end{poc}
Based on \cref{claim:PropertyZ}(1), let \( \{a_1, a_2\} \) and \( \{b_1, b_2\} \) form a matching of size \( 2 \) in \( \mathcal{Z} \). Suppose \( Z \) contains at least \( 7 \) elements, then let \( \{a, b, c\} \subseteq Z \setminus \{a_1, a_2, b_1, b_2\} \). Since the matching number of \( \mathcal{Z} \) is \(2\) and the maximum degree of \( \mathcal{Z} \) is at most \( 3 \), then there exists some set $T=\{a_{i},b_{j}\}$ for \( 1 \leq i, j \leq 2 \), such that each element in \( \{a, b, c\} \) must be adjacent to at least one element in \( T  \). By pigeonhole principle and by symmetry, we can assume that $i=1$ and $a_{1}$ is adjacent to both $a$ and $b$, then the degree of $a_{1}$ in $\mathcal{Z}$ is $3$. However, the set $\{b_{1},b_{2}\}$ is disjoint with $\{a_{1},a_{2},a,b\}$, which is a contradiction with \cref{claim:PropertyZ}(3).

Therefore, we can assume that \( Z \) contains at most \( 6 \) elements. Since \( \mathcal{Z} \) is \( K_{2,2} \)-free by \cref{claim:PropertyZ}(2) and the number of edges in \( \mathcal{Z} \) is at least \( 5 \), without loss of generality, we may assume there exists an element \( x \in Z\setminus\{a_{1},a_{2},b_{1},b_{2}\} \) such that \( \{x, a_1\} \) is an edge in \( \mathcal{Z} \). Let \( \mathcal{Z}' \) be the subgraph of \( \mathcal{Z} \) induced by the vertices \( \{x, a_1, a_2, b_1, b_2\} \) with edges \( \{a_1, a_2\} \), \( \{b_1, b_2\} \), and \( \{a_1, x\} \). Note that \( \{a_1, b_1\} \) and \( \{a_1, b_2\} \) are the only \( 2 \)-element transversal sets of \( \mathcal{Z}' \), and any \( 2 \)-element transversal set of \( \mathcal{Z} \) must also be a \( 2 \)-element transversal set of \( \mathcal{Z}' \). 

Now, observe that each element in \( \{a_2, x, b_1, b_2\} \) cannot have a neighbor other than \( a_1 \); otherwise, the number of \( 2 \)-element transversal sets would be less than \( 2 \), contradicting \cref{claim:PropertyZ}(4). However, since \( a_1 \) has at most \( 3 \) neighbors, this implies that \( \mathcal{Z} \) has at most \( 4 \) edges, which contradicts our initial assumption that $|\mathcal{Z}|\ge 5$. This completes the proof.

\section{Concluding remarks and future work}
In this paper, we investigate a fundamental open problem in extremal set theory: determining the maximum size of a \((d+1)\)-uniform set system on \([n]\) with VC-dimension at most \(d\), for any given \(d \ge 2\) and \(n \ge 2d + 2\). Our main contribution is a complete determination of the maximum size of 3-uniform set systems with VC-dimension at most 2 for all values of \(n\). From the perspective of extremal results, this represents a significant first step toward resolving a central problem that has attracted attention from Erd\H{o}s~\cite{1984Erdos}, Frankl and Pach~\cite{1984Franklpach} since the 1980s. In particular, our result answers the specific case of 3-uniform set systems posed by Mubayi and Zhao~\cite{2007JAC} in 2007. We believe that the following research directions are interesting for further exploration.

\subsection{Characterization of extremal configurations for $3$-uniform set systems}
In 2007, Mubayi and Zhao~\cite{2007JAC} extended earlier constructions from~\cite{1997CombFan} and established that there exist infinitely many $(d+1)$-uniform set systems with VC-dimension $d$ and maximum size $\binom{n-1}{d} + \binom{n-4}{d-2}$. They conjectured that this is the correct answer  for~\cref{question}.

We completely resolve the case for $3$-uniform set systems with VC-dimension $2$, proving that for $n \ge 7$, the maximum size is precisely $\binom{n-1}{2} + 1$, thereby confirming their conjecture when $d=2$. An intriguing question that naturally follows is whether the Mubayi-Zhao constructions are the only extremal configurations achieving this bound. Note that the transversal number of their constructions is always 2. However, consider the following 3-uniform set system on $[7]$ with VC-dimension 2:
$$
\begin{aligned}
\big\{ &\{1,2,3\}, \{1,2,4\}, \{1,3,4\}, \{2,3,4\}, \{1,2,5\}, \{1,3,5\}, \{2,3,5\}, \{1,4,5\},\\
&\{1,2,6\}, \{1,3,6\}, \{2,3,6\}, \{2,4,6\}, \{3,5,6\}, \{1,2,7\}, \{1,3,7\}, \{2,3,7\} \big\}.
\end{aligned}
$$
It is straightforward to verify that the transversal number of this system is $3$, demonstrating that even for $n = 7$, the Mubayi-Zhao constructions are not the only extremal configurations achieving the upper bound of $16$. While it is possible to explicitly list all extremal constructions for $n = 7$, it remains an open question whether for larger $n$, there are infinitely many such constructions beyond those provided by Mubayi and Zhao.

\subsection{Improved lower bound when $n=2d$}
Interestingly, we also discovered that the construction previously believed to be optimal fails when \(n = 6\); it only becomes optimal for \(n \ge 7\). This leads to the rather surprising conclusion that the solution to~\cref{question} might naturally divide into three or even more distinct regimes. More precisely, we suspect that such a phenomenon may occur more generally for all \((d+1)\)-uniform set systems with \(d \ge 2\). As supporting evidence, we present a 4-uniform set system \(\mathcal{F} \subseteq \binom{[8]}{4}\) with VC-dimension 3 and size 45, which exceeds the size \(\binom{n-1}{d} + \binom{n-4}{d-2} = 39\) achieved by the previously known constructions in~\cite{1997CombFan,2007JAC}. The system \(\mathcal{F}\subseteq\binom{[8]}{4}\) is given explicitly by:
\[
\begin{aligned}
\big\{ 
&\{1,2,3,4\}, \{1,2,3,5\}, \{1,2,4,5\}, \{1,3,4,5\}, \{2,3,4,5\}, \{1,2,3,6\}, \{1,2,4,6\}, \{1,3,4,6\}, \{2,3,4,6\}, \\
&\{1,2,5,6\}, \{1,3,5,6\}, \{2,3,5,6\}, \{1,4,5,6\}, \{2,4,5,6\}, \{3,4,5,6\}, \{1,2,3,7\}, \{1,2,4,7\}, \{1,3,4,7\}, \\
&\{2,3,4,7\}, \{1,2,5,7\}, \{1,3,5,7\}, \{2,3,5,7\}, \{1,4,5,7\}, \{2,4,5,7\}, \{3,4,5,7\}, \{1,2,6,7\}, \{1,3,6,7\}, \\
&\{2,3,6,7\}, \{1,2,3,8\}, \{1,2,4,8\}, \{1,3,4,8\}, \{2,3,4,8\}, \{1,2,5,8\}, \{1,3,5,8\}, \{2,3,5,8\}, \{1,4,5,8\}, \\
&\{3,4,5,8\}, \{1,4,6,8\}, \{2,4,6,8\}, \{3,4,6,8\}, \{1,5,6,8\}, \{2,5,7,8\}, \{3,5,7,8\}, \{4,5,7,8\}, \{2,4,5,8\}
\big\}.
\end{aligned}
\]
This example further highlights the subtleties in understanding the extremal behavior of set systems under VC-dimension constraints, even for small parameters. In particular, it will be interesting to find a systematical construction for~\cref{question} when $n=2d+2$ that beats Ahlswede-Khachatrian's lower bound. Note that for this particular $n$, the lower bound given by Ahlswede-Khachatrian is
$$\binom{2d + 1}{d} + \binom{2d - 2}{d - 2} = \left(\frac{1}{2} + \frac{1}{16} + o(1)\right)\binom{2d + 2}{d + 1}.$$
We suspect the coefficient $\frac{9}{16}$ can be further improved.

\subsection{Improved upper bound for general $d$}
At present, we think that fully resolving~\cref{question} for all \(d\ge 3\) and all \(n \ge 2d + 2\) is extremely challenging. A potentially attainable goal is to determine whether, for \(d \ge 3\) and sufficiently large $n$, every set system \(\mathcal{F} \subseteq \binom{[n]}{d+1}\) with VC-dimension at most \(d\) has maximum size \(\binom{n-1}{d} + C_d n^{d-2}\) for some constant \(C_d>0\).

\section*{Acknowledgement}
Jian Wang was supported by National Natural Science
Foundation of China Grant no. 12471316.
Zixiang Xu was supported by IBS-R029-C4. Shengtong Zhang was supported by the National Science
Foundation under Grant No. DMS-1928930, while he was in
residence at the Simons Laufer Mathematical Sciences Institute in
Berkeley, California, during the Spring 2025 semester.

\bibliographystyle{abbrv}
\bibliography{FranklPach}

\appendix

\section*{Appendix}
In this appendix, we present the backtracking algorithm used to compute the maximum size of a $3$-uniform set system on $[n]$ with VC-dimension at most $2$, specifically for the cases $n = 6$ and $n = 7$. This algorithm employs an exhaustive yet efficient search strategy, leveraging pruning based on VC-dimension constraints to reduce the search space.

Starting from the full collection of all $\binom{n}{3}$ 3-element subsets of $[n]$, we fix an ordering and denote them by $F_1, F_2, \ldots, F_m$, where $m = \binom{n}{3}$. The algorithm incrementally builds valid set systems by extending candidate collections, while backtracking whenever a potential extension would violate the VC-dimension constraint. The pseudocode is provided below, and the full implementation can be found at~\cite{2025CodesForsmallN}.

\begin{algorithm}[H]
  \KwData{Order all the 3-sets in $\binom{[n]}{3}$ as $F_1,F_2,\ldots,F_m$ where $m=\binom{n}{3}$.}
  \KwResult{Return $M$ as the maximum size of a set system on $[n]$ with VC dimension at most 2. }
  $i_1\leftarrow 1$, $\cF\leftarrow \{F_{i_1}\}$, $j\leftarrow 2$, $\ell\leftarrow i_1$ and $M\leftarrow 1$.
  
  \While{$j\geq 2$}{
    $\ell\leftarrow \ell+1$\;
    \eIf{$\ell>m$}{
      $M \leftarrow \max\{M,j-1\}$\;
      $\ell\leftarrow i_{j-1}$\;
      $j=j-1$\;
    }
    {
    $i_j\leftarrow \ell$\;
    $\cF'\leftarrow \{F_{i_1},F_{i_2},\ldots,F_{i_{j-1}}, F_{i_j}\}$\;      
    \If{ the VC dimension of $\cF'$ is at most 2}{
         $\cF \leftarrow \cF'$\;
         $j\leftarrow j+1$\;
    }
    }
  }
  \caption{The backtracking algorithm}
\end{algorithm}

The algorithm maintains three key variables:
\begin{itemize}
    \item $j$: the current depth (i.e., number of selected sets plus one);
    \item $\ell$: the candidate index for the next 3-set;
    \item $M$: the largest size of a valid set system found so far.
\end{itemize}
The process begins with the first $3$-set $F_1$ and attempts to extend the system by adding further 3-sets in order. At each step, the candidate set $\mathcal{F}'$ is tested for VC-dimension at most $2$. If valid, the set is accepted and the search continues deeper; otherwise, the algorithm skips to the next candidate or backtracks if no further sets remain.

This approach is effective for small $n$, where the search space remains tractable. Our computational experiments show that the algorithm quickly finds the exact maximum sizes:
\begin{itemize}
    \item  For $n = 6$, the maximum is $M = 13$,
\item For $n = 7$, the maximum is $M = 16$.
\end{itemize}
These results constitute a crucial component of our complete resolution of~\cref{question} for $d = 2$.

\end{document}